\newtheorem{theorem}{Theorem}[section]
\newtheorem{lemma}{Lemma}[section]
\newtheorem{corollary}{Corollary}[section]
\newtheorem{remark}{Remark}[section]
\title{Refined Solutions of Time Inhomogeneous Optimal Stopping Games via Dirichlet Form}
\author{Yipeng Yang\thanks{Department of Mathematics, University of Missouri-Columbia, Columbia, Missouri, 65211 (yangyip@missouri.edu)}}
\begin{document}
\maketitle
\begin{abstract}
The properties of value functions of time inhomogeneous optimal
stopping problem and zero-sum game (Dynkin game) are studied through
time dependent Dirichlet form. Under the absolute continuity
condition on the transition function of the underlying diffusion
process and some other assumptions, the refined solutions without
exceptional starting points are proved to exist, and the value
functions of the optimal stopping and zero-sum game, which are
finely and cofinely continuous, are characterized as the solutions
of some variational inequalities, respectively.
\end{abstract}
{\bf Key words:} Time inhomogeneous Dirichlet form, Optimal
stopping, Dynkin game, Variational inequality

\noindent{\bf AMS subject classifications:} 31C25, 49J40, 60G40,
60J60

\section{Introduction}

Let ${\bf M}=(X_t,P_{(s,x)})$ be a diffusion process on a locally
compact separable metric space $\mathbb{X}$. For two finely
continuous functions $g,h$ on $[0,\infty)\times\mathbb{X}$ and a
constant $\alpha>0$, define the following return functions of
optimal stopping games:
\begin{eqnarray}
J_{(s,x)}(\sigma)&=&E_{(s,x)}(e^{-\alpha\sigma}g(s+\sigma,X_{s+\sigma}))\label{voptstp}\\
J_{(s,x)}(\tau,\sigma)&=&E_{(s,x)}\left[e^{-\alpha(\tau\wedge\sigma)}\left(g(s+\sigma,X_{s+\sigma})I_{\tau>\sigma}+h(s+\tau,X_{s+\tau})I_{\tau\leq\sigma}\right)\right].\label{voptgame}
\end{eqnarray}
The values of the stopping games are defined as
$\tilde{e}_g=\sup_\sigma J_{(s,x)}(\sigma)$ and
$\tilde{\bar{w}}=\sup_\sigma\inf_\tau J_{(s,x)}(\tau,\sigma)$,
respectively. This kind of optimal stopping problems have been
continually developed due to its broad application in finance,
resource control or production management.

In the time homogeneous case, where ${\bf M},g,h$ are all time
homogenous, it is well known that $\tilde{e}_g$ is a quasi
continuous version of the solution of a variational inequality
problem formulated in terms of the Dirichlet form, see Nagai
\cite{Nagai78}. This result was successfully extended by Zabczyk
\cite{Zab84} to Dynkin game (zero sum game) where $\tilde{\bar{w}}$
was shown to be the quasi continuous version of the solution of a
certain variational inequality problem. In their work, there always
exist an exceptional set $N$ of starting points of ${\bf M}$. In
2006, Fukushima and Menda \cite{Fuku06} showed that if the
transition function of ${\bf M}$ is absolutely continuous with
respect to the underlying measure ${\bf m}$, then there does not
exist the exceptional set $N$, and $\tilde{e}_g$ and
$\tilde{\bar{w}}$ are finely continuous with any starting point of
${\bf M}$.

However, more work is needed to extend these results to the time
inhomogeneous case, especially the characteristics of the value
function. Using the time dependent Dirichlet form (generalized
Dirichlet form), Oshima \cite{Oshima06} showed that under some
conditions, $\tilde{e}_g$ (also $\tilde{\bar{w}}$) is still finely
and cofinely continuous with quasi every starting point of ${\bf
M}$, and except on an exceptional set $N$, $\tilde{e}_g$ (also
$\tilde{\bar{w}}$) is characterized as a version of the solution of
a variational inequality problem.

Recently, Palczewski and Stettner
\cite{Palczewski10}\cite{Palczewski11} used the penalty method to
characterize the continuity of the value function of a time
inhomogeneous optimal stopping problem. In their work, the
underlying process ${\bf M}$ is assumed to satisfy the Feller
continuity property. Lamberton \cite{Lamberton09} derived the
continuity property of the value function of a one-dimensional
optimal stopping problem, and the value function was characterized
as the unique solution of a variational inequality in the sense of
distributions. However, that result was difficulty to be extended to
multi-dimensional diffusions.

In all the afore mentioned work, the property of the time
inhomogeneous value functions along the dimension $t$ (time) were
not further studied. In this paper, through the time dependent
Dirichlet form, it is showed that under the absolute continuity
condition on the transition probability function $p_t$ and some
other assumptions, the value functions do belong to the functional
space $\mathscr{W}$, see (\ref{spaceW}). Further it is showed that
Oshima's \cite{Oshima06} results still hold and there does not exist
the exceptional set for the starting points of ${\bf M}$. This
result is then applied in Section \ref{exam} to the
 time inhomogeneous optimal stopping games where
the underlying process is a multi-dimensional time inhomogeneous Ito
diffusion.

\section{Time Dependent Dirichlet Form}\label{tidirichlet}

In this section we define the settings for the time dependent
Dirichlet form that are similar to those in \cite{Oshima06},
although some results from \cite{Stannat99}, whose notions are
different, will be used later. Let $\mathbb{X}$ be a locally compact
separable metric space and ${\bf m}$ be a positive Radon measure on
$\mathbb{X}$ with full support. For each $t\geq 0$, define
$(E^{(t)},F)$ as an ${\bf m}$-symmetric Dirichlet form on
$H=L^2(\mathbb{X};{\bf m})$ and for any $u\in F$, we assume that
$E^{(t)}(u,u)$ is a measurable function of $t$ and satisfies
\begin{displaymath}
\lambda^{-1}\|u\|_F^2\leq E_1^{(t)}(u,u)\leq\lambda\|u\|_F^2
\end{displaymath} for some constant $\lambda>0$, where ${E}_\alpha^{(t)}(u,v)={E}^{(t)}(u,v)+\alpha(u,v)_{\bf m},\ \alpha>0$,
and the $F$ norm is defined to be $\|u\|_{F}^2={E}_1^{(0)}(u,u)$. We
also assume that $F$ is regular and local in the usual sense
\cite{Fuku11}.

Define $F'$ as the dual space of $F$, then it can be seen that
$F\subset H=H'\subset F'$. For each $t$, there exists an operator
$L^{(t)}$ from $F$ to $F'$ such that
\begin{displaymath}
-( {L}^{(t)}u,v)={E}^{(t)}(u,v), \quad u,v\in F.
\end{displaymath}  Further the $F'$ norm is defined as
\begin{displaymath}
\|v\|_{F'}=\sup_{\|u\|_{F}=1}\{(v,u)\},
\end{displaymath} where $(v,u)$ denotes the canonical coupling
between $v\in F'$ and $u\in F$.

Define the spaces
\begin{displaymath}\mathscr{H}=\{\varphi(t,\cdot)\in
H: \|\varphi\|_{\mathscr{H}}<\infty\},
\end{displaymath} where
\begin{displaymath}
\|\varphi\|_{\mathscr{H}}^2=\int_{\mathbb{R}}\|\varphi(t,\cdot)\|_{H}^2dt,
\end{displaymath}
and
\begin{displaymath}
\mathscr{F}=\{\varphi(t,x)\in F:\|\varphi\|_{\mathscr{F}}<\infty \},
\end{displaymath} where
\begin{displaymath}
\|\varphi\|_{\mathscr{F}}^2=\int_{\mathbb{R}}\|\varphi(t,\cdot)\|_{F}^2dt.
\end{displaymath}
Clearly $\mathscr{F}\subset
\mathscr{H}=\mathscr{H}'\subset\mathscr{F}'$ densely and
continuously, where $\mathscr{H}',\mathscr{F}'$ are the dual spaces
of $\mathscr{H},\mathscr{F}$ respectively.

For any $\varphi\in\mathscr{F}$, considering $\varphi$ as a function
of $t\in\mathbb{R}$ with values in $F$, the distribution derivative
$\partial \varphi/\partial t$ is considered as a function of
$t\in\mathbb{R}$ with values in $F'$ such that
\begin{displaymath}
\int_{\mathbb{R}}\frac{\partial\varphi}{\partial
t}(t,\cdot)\xi(t)dt=-\int_{\mathbb{R}}\varphi(t,\cdot)\xi'(t)dt,
\end{displaymath} for any $\xi\in C_0^\infty(\mathbb{R})$. Then we
can define the space $\mathscr{W}$ as
\begin{equation}\label{spaceW}
\mathscr{W}=\{\varphi(t,x)\in\mathscr{F}: \frac{\partial
\varphi}{\partial t}\in\mathscr{F}',
\|\varphi\|_{\mathscr{W}}<\infty \},
\end{equation} where
\begin{displaymath}
\|\varphi\|_{\mathscr{W}}^2=\left\|\frac{\partial\varphi}{\partial
t}\right\|_{\mathscr{F}'}^2+\|\varphi\|_{\mathscr{F}}^2.
\end{displaymath} Since $\mathscr{F}$ and
$\mathscr{F}'$ are Banach spaces, it is easy to see that
$\mathscr{W}$ is also a Banach space. Further, $\mathscr{W}$ is
dense in $\mathscr{F}$.

We further define the bilinear form $\mathcal{E}$ by
\begin{equation}
\mathcal{E}(\varphi,\psi)=\left\{\begin{aligned}-&\langle
\frac{\partial \varphi}{\partial
t},\psi\rangle+\int_{\mathbb{R}}{E}^{(t)}(\varphi(t,\cdot),\psi(t,\cdot))dt,\quad
\varphi\in\mathscr{W},\psi\in\mathscr{F},\\
&\langle \frac{\partial \psi}{\partial
t},\varphi\rangle+\int_{\mathbb{R}}{E}^{(t)}(\varphi(t,\cdot),\psi(t,\cdot))dt,\quad
\varphi\in\mathscr{F},\psi\in\mathscr{W},
\end{aligned}\right.
\end{equation} where $\langle \frac{\partial \varphi}{\partial t},\psi\rangle=\int_{\mathbb{R}}\left(\frac{\partial
\varphi}{\partial t},\psi\right)dt$. We call
$(\mathcal{E},\mathscr{F})$ a time dependent Dirichlet form on
$\mathscr{H}$ \cite{Oshima06}.

As in \cite{Oshima06} we may introduce the time space process ${
Z}_t=(\tau(t),{ X}_t)$ on the domain
$\mathbb{Z}=\mathbb{R}\times\mathbb{X}$ with uniform motion
$\tau(t)$, then the resolvent $R_\alpha f$ of ${ Z}_t$ defined by
\begin{equation}\label{resolR}
R_\alpha f(s,x)=E_{(s,x)}\left(\int_0^\infty e^{-\alpha t}f(s+t,{
X}_{s+t})dt\right),\ \ (s,x)=z,\ \ f\in\mathscr{H},
\end{equation}
satisfies
\begin{equation}\label{res_id}
\left(\alpha-\frac{\partial}{\partial t}-{L}^{(t)}\right)R_\alpha
f(t,{ x})=f(t,{ x}),\ \forall t\geq 0.
\end{equation}
Furthermore, $R_\alpha f$ is considered as a version of $G_\alpha
f\in\mathscr{W}$, where $G_\alpha$ is the resolvent associated with
the form $\mathcal{E}_\alpha(,)=\mathcal{E}(,)+\alpha(,)_\nu$ and it
satisfies
\begin{equation}\label{D_res_eq}
\mathcal{E}_\alpha(G_\alpha f,\varphi)=(f,\varphi)_\nu,\quad \forall
\varphi\in\mathscr{F},
\end{equation} where $d\nu(t,{ x})=dtd{\bf m}({ x})$. We may
write $(\cdot,\cdot)_\nu$ as $(\cdot,\cdot)_{\mathscr{H}}$ to
indicate it as the inner product in $\mathscr{H}$.



We now define $\mathcal{A}$ as a bilinear form on
$\mathscr{F}\times\mathscr{F}$ by
\begin{displaymath}
\mathcal{A}(\varphi,\psi)=\int_{\mathbb{R}}{E}^{(t)}(\varphi(t,\cdot),\psi(t,\cdot))dt,
\end{displaymath}
and set
$\mathcal{A}_\alpha(\varphi,\psi)=\mathcal{A}(\varphi,\psi)+\alpha(\varphi,\psi)_{\mathscr{H}}$,
then it can be seen that
$\mathcal{E}_\alpha(\varphi,\psi)=-\langle\frac{\partial\varphi}{\partial
t},\psi\rangle+\mathcal{A}_\alpha(\varphi,\psi)$ if
$\varphi\in\mathscr{W},\psi\in\mathscr{F}$, and
$\mathcal{E}_\alpha(\varphi,\psi)=\langle\frac{\partial\psi}{\partial
t},\varphi\rangle+\mathcal{A}_\alpha(\varphi,\psi)$ if
$\varphi\in\mathscr{F},\psi\in\mathscr{W}$. Also notice that if
$\varphi\in\mathscr{W}$, $\langle\frac{\partial\varphi}{\partial
t},\varphi\rangle=0$, hence
$\mathcal{E}_\alpha(\varphi,\varphi)=\mathcal{A}_\alpha(\varphi,\varphi)$
in this case, see Corollary 1.1 in \cite{Oshima04}.

A function $\varphi\in\mathscr{F}$ is called $\alpha$-potential if
$\mathcal{E}_\alpha(\varphi,\psi)\geq 0$ for any nonnegative
function $\psi\in\mathscr{W}$. Denote by $\mathscr{P}_\alpha$ the
family of all $\alpha$-potential functions. A function
$\varphi\in\mathscr{F}$ is called $\alpha$-excessive if and only if
$\varphi\geq 0$ and $nG_{n+\alpha}\varphi\leq\varphi$ a.e. for any
$n\geq 0$. For any $\alpha$-potential $\varphi\in\mathscr{F}$,
define its $\alpha$-excessive modification as
\begin{displaymath}\tilde{\varphi}=\lim_{n\to \infty}nR_{n+\alpha} \varphi.\end{displaymath}

For any function $g\in\mathscr{H}$, let
\begin{displaymath}
\mathscr{L}_g=\{\varphi\in\mathscr{F}: \varphi\geq g\ \ \nu \text{
a.e.}\},
\end{displaymath} then the following result holds (see Lemma 1.1 in
\cite{Oshima06}):
\begin{lemma}\label{lemmagep}
For any $\epsilon>0$ and $\alpha>0$, there exists a unique function
$g_\epsilon^\alpha\in\mathscr{W}$ such that
\begin{equation}\label{gepsilon}
-\left(\frac{\partial g_\epsilon^\alpha}{\partial
t},u\right)+E_\alpha^{(t)}(g_\epsilon^\alpha(t,\cdot),u)
=\frac{1}{\epsilon}\left((g_\epsilon^\alpha(t,\cdot)-g(t,\cdot))^-,u\right)
\end{equation} for any $u\in F$.
\end{lemma}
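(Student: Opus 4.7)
The plan is to recast (\ref{gepsilon}) as a fixed-point equation for the linear resolvent $G_\beta$ at a shifted parameter $\beta=\alpha+1/\epsilon$, so that Banach's contraction mapping theorem applies directly on $\mathscr{H}$. The enabling observation is the elementary identity $w+(w-g)^-=w\vee g$: adding $\frac{1}{\epsilon}(g_\epsilon^\alpha,u)$ to both sides of (\ref{gepsilon}) and integrating against test functions $\varphi\in\mathscr{F}$ converts the equation into
$$\mathcal{E}_{\alpha+1/\epsilon}(g_\epsilon^\alpha,\varphi)=\frac{1}{\epsilon}\bigl(g_\epsilon^\alpha\vee g,\varphi\bigr)_\nu,\qquad\varphi\in\mathscr{F},$$
which by (\ref{D_res_eq}) is equivalent to the fixed-point relation $g_\epsilon^\alpha=G_{\alpha+1/\epsilon}\bigl(\frac{1}{\epsilon}(g_\epsilon^\alpha\vee g)\bigr)$.

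Accordingly I would define $T:\mathscr{H}\to\mathscr{W}\subset\mathscr{H}$ by $T(v)=G_{\alpha+1/\epsilon}\bigl(\frac{1}{\epsilon}(v\vee g)\bigr)$, which is well-defined because $|v\vee g|\le|v|+|g|$ places $v\vee g$ in $\mathscr{H}$ whenever $v,g\in\mathscr{H}$. To verify that $T$ is a strict contraction, I first derive the resolvent bound $\|G_\beta f\|_\mathscr{H}\le\beta^{-1}\|f\|_\mathscr{H}$ by testing (\ref{D_res_eq}) against $G_\beta f$ itself: for $\varphi\in\mathscr{W}$ the cancellation $\langle\frac{\partial\varphi}{\partial t},\varphi\rangle=0$ combined with the nonnegativity of each $E^{(t)}$ gives $\mathcal{E}_\beta(\varphi,\varphi)=\mathcal{A}_\beta(\varphi,\varphi)\ge\beta\|\varphi\|_\mathscr{H}^2$, and Cauchy--Schwarz closes the estimate. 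Combined with the $1$-Lipschitz property of $v\mapsto v\vee g$ on $\mathscr{H}$, this yields
$$\|T(v_1)-T(v_2)\|_\mathscr{H}\le\frac{1}{\epsilon(\alpha+1/\epsilon)}\|v_1-v_2\|_\mathscr{H}=\frac{1}{1+\alpha\epsilon}\|v_1-v_2\|_\mathscr{H},$$
a strict contraction for every $\alpha,\epsilon>0$. Banach's theorem then produces a unique fixed point, which automatically lies in $\mathscr{W}$ since $T$ takes values there. Specializing test functions to $\varphi(t,x)=\xi(t)u(x)$ with $\xi\in C_0^\infty(\mathbb{R})$ and $u\in F$ recovers (\ref{gepsilon}) pointwise for a.e.\ $t$.

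The main obstacle I anticipate is conceptual rather than computational: if one tries the naive fixed-point map $v\mapsto G_\alpha(\frac{1}{\epsilon}(v-g)^-)$ directly, the resulting contraction estimate is only $(\alpha\epsilon)^{-1}$, which fails for small $\alpha\epsilon$. Shifting the form by $1/\epsilon$---equivalently, absorbing a copy of $g_\epsilon^\alpha$ from the penalty term into $\mathcal{E}_\alpha$---is precisely what converts the prefactor into $(1+\alpha\epsilon)^{-1}<1$ and yields a uniform global contraction; the remaining pieces (the resolvent bound, lattice continuity of $\vee$ in $\mathscr{H}$, and localization in $t$) are routine.
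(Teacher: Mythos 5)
Your argument is correct: the identity $w+(w-g)^-=w\vee g$, the shift to $\mathcal{E}_{\alpha+1/\epsilon}$, the resolvent bound $\|G_\beta f\|_{\mathscr{H}}\le\beta^{-1}\|f\|_{\mathscr{H}}$ obtained from $\langle\frac{\partial\varphi}{\partial t},\varphi\rangle=0$ and the nonnegativity of $E^{(t)}$, and the resulting contraction factor $(1+\alpha\epsilon)^{-1}$ all check out. Note that the paper gives no proof of this lemma at all --- it imports it from Lemma 1.1 of \cite{Oshima06} (and Proposition 1.6 of \cite{Stannat99}), and your fixed-point scheme $v\mapsto G_{\alpha+1/\epsilon}\bigl(\frac{1}{\epsilon}(v\vee g)\bigr)$ is essentially the argument used there, so this is the intended proof rather than a genuinely different route.
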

As a consequence, $g_\epsilon^\alpha$ solves
\begin{equation}\label{gHnorm}\mathcal{E}_\alpha(g_\epsilon^\alpha,\psi)=\frac{1}{\epsilon}((g_\epsilon^\alpha-g)^-,\psi)_{\mathscr{H}}, \ \ \forall
\psi\in\mathscr{F},
\end{equation} see Proposition 1.6 in \cite{Stannat99}.


By Theorem 1.2 in \cite{Oshima06}, $e_g=\lim_{\epsilon\to
0}g_\epsilon^\alpha$ converges increasingly, strongly in
$\mathscr{H}$ and weakly in $\mathscr{F}$, and furthermore, $e_g$ is
the minimal function of $\mathscr{P}_\alpha\cap\mathscr{L}_g$
satisfying
\begin{equation}\label{viqh0}
\mathcal{A}_\alpha (e_g,e_g)\leq\mathcal{E}_\alpha(e_g,\psi),\quad
\forall \psi\in\mathscr{L}_g\cap\mathscr{W}.
\end{equation}

Given any open set $A\in\mathbb{Z}$, the capacity of $A$ is defined
by
\begin{displaymath} Cap(A)=\mathcal{E}_\alpha(e_{I_A},\psi),\
\psi\in\mathscr{W}, \psi=1 \text{ a.e. on }A.
\end{displaymath}

If $\varphi\in\mathscr{F}$ is an $\alpha$-potential, then there
exists a positive Radon measure $\mu_\varphi^\alpha$ on $\mathbb{Z}$
such that
\begin{displaymath}
\mathcal{E}_\alpha(\varphi,\psi)=\int_{\mathbb{Z}}\psi(z)d\mu_\varphi^\alpha(z)\quad\text{for
any }\psi\in C_0(\mathbb{Z})\cap\mathscr{W}
\end{displaymath} By Lemma 1.4 in
\cite{Oshima06}, $\mu_{\varphi}^\alpha$ does not charge any Borel
set of zero capacity. Put $e_A=e_{I_A}$ and
$\mu_A^\alpha=\mu_{e_A}^\alpha$, then the capacity of the set $A$
can also be defined by
\begin{displaymath}
Cap(A)=\mu_{A}^\alpha(\bar{A}).\end{displaymath} The notion of the
capacity is extended to any Borel set by the usual manner. A set is
called exceptional if it is of zero capacity. If a statement holds
everywhere except on an exceptional set, we say the statement holds
quasi-everywhere (q.e.).

\section{The Time Inhomogeneous Stopping Games}
In this section we will characterize the properties of the value
functions $\tilde{e}_g=\sup_\sigma J_{(s,x)}(\sigma)$ and
$\tilde{\bar{w}}=\sup_\sigma\inf_\tau J_{(s,x)}(\tau,\sigma)$ of the
time inhomogeneous stopping games. We first assume that the
transition probability function $p_t$ of the process $X_t$ satisfies
the \emph{absolute continuity} condition:
\begin{equation}\label{abscont}
p_t(x,\cdot)\ll m(\cdot),\quad \forall t.
\end{equation} In fact,
the Feller property in \cite{Palczewski10} implies the absolute
continuity condition on $p_t$, see, e.g., page 165 of \cite{Fuku11}.

\subsection{The Time Inhomogeneous Optimal Stopping Problem}
Consider $\tilde{e}_g(z)=\sup_\sigma J_{z}(\sigma)$ where
\begin{equation}
J_{z}(\sigma)=J_{(s,x)}(\sigma)=E_{(s,x)}(e^{-\alpha\sigma}g(s+\sigma,X_{s+\sigma})).
\end{equation}
Oshima showed that (see Theorem 3.1 in \cite{Oshima06}) if
$g\in\mathscr{F}$ is quasi continuous and $\mathscr{L}_g\cap
\mathscr{W}\neq\phi$, then $\tilde{e}_g(z)\in\mathscr{F}$
 is finely and cofinely continuous q.e., and $e_g$ solves the variational inequality (\ref{viqh0}).
In what follows we give conditions under which
$\tilde{e}_g\in\mathscr{W}$ and Oshima's result holds without the
exceptional set.

It is assumed that $g\in\mathscr{W}$ is a finely continuous function
on ${\mathbb{Z}}$ such that
\begin{equation}\label{gbdd}
|g(t,x)|\leq \varphi(t,x),
\end{equation} for some finite $\alpha$-excessive function $\varphi\in\mathscr{W}$
on $\mathbb{Z}$. We also assume that there exists a constant $K$
such that
\begin{equation}\label{assumHnorm}
\sup_{\epsilon>0}\frac{1}{\epsilon}\|(g_\epsilon^\alpha-g)^-\|_{\mathscr{H}}\leq
K\|g\|_{\mathscr{H}},
\end{equation}  where
$g_\epsilon^\alpha$ solves (\ref{gepsilon}). In the rest of this
section, the notion $K_i$ for some index $i$ denotes a constant.

\begin{lemma}\label{egW}
Under the assumptions (\ref{gbdd}) and (\ref{assumHnorm}),
$e_g\in\mathscr{W}$.
\end{lemma}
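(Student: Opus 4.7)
The plan is to show the penalized family $\{g_\epsilon^\alpha\}_{\epsilon>0}$ is uniformly bounded in $\mathscr{W}$, extract a weak $\mathscr{W}$-limit along a subsequence, and identify it with $e_g$ using the weak $\mathscr{F}$-convergence already supplied by Theorem 1.2 of \cite{Oshima06}. Since $\mathscr{W}$ is a Hilbert space (its norm squared is the sum of two Hilbert norms, and the image under $\varphi\mapsto(\varphi,\partial\varphi/\partial t)$ is a closed subspace of $\mathscr{F}\times\mathscr{F}'$), uniform boundedness immediately yields weak subsequential compactness, so the real work is the two uniform estimates plus the identification of the limit.

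For the $\mathscr{F}$-bound I would test (\ref{gHnorm}) against $\psi=g_\epsilon^\alpha$. Because $g_\epsilon^\alpha\in\mathscr{W}$, the identity $\langle\partial g_\epsilon^\alpha/\partial t,g_\epsilon^\alpha\rangle=0$ collapses $\mathcal{E}_\alpha(g_\epsilon^\alpha,g_\epsilon^\alpha)$ to $\mathcal{A}_\alpha(g_\epsilon^\alpha,g_\epsilon^\alpha)$. Splitting $g_\epsilon^\alpha=g-(g_\epsilon^\alpha-g)^-$ on the contact set $\{g_\epsilon^\alpha<g\}$ gives
\[
\mathcal{A}_\alpha(g_\epsilon^\alpha,g_\epsilon^\alpha)=\frac{1}{\epsilon}\bigl((g_\epsilon^\alpha-g)^-,g\bigr)_{\mathscr{H}}-\frac{1}{\epsilon}\|(g_\epsilon^\alpha-g)^-\|_{\mathscr{H}}^2,
\]
which Cauchy-Schwarz combined with (\ref{assumHnorm}) bounds by $K\|g\|_{\mathscr{H}}^2$. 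The fiberwise lower bound $\lambda^{-1}\|u\|_F^2\le E_1^{(t)}(u,u)$ integrates to a coercivity inequality of the form $\mathcal{A}_\alpha(u,u)\ge c\|u\|_{\mathscr{F}}^2$ for some $c>0$ depending on $\lambda$ and $\alpha$, and thus produces $\|g_\epsilon^\alpha\|_{\mathscr{F}}\le K_1$ uniformly in $\epsilon$.

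For the $\mathscr{F}'$-bound on the time derivative, I would rewrite the defining relation of $\mathcal{E}_\alpha$ (in its $\varphi\in\mathscr{W}$, $\psi\in\mathscr{F}$ form) as
\[
\bigl\langle\partial g_\epsilon^\alpha/\partial t,\psi\bigr\rangle=\mathcal{A}_\alpha(g_\epsilon^\alpha,\psi)-\frac{1}{\epsilon}\bigl((g_\epsilon^\alpha-g)^-,\psi\bigr)_{\mathscr{H}},\qquad \psi\in\mathscr{F}.
\]
The first term is dominated by a constant multiple of $\|g_\epsilon^\alpha\|_{\mathscr{F}}\|\psi\|_{\mathscr{F}}$ via Cauchy-Schwarz for the symmetric form $E^{(t)}$ together with the upper bound on $E_1^{(t)}$; the second term is dominated by $K\|g\|_{\mathscr{H}}\|\psi\|_{\mathscr{H}}\le K'\|\psi\|_{\mathscr{F}}$ by (\ref{assumHnorm}) and the continuous embedding $\mathscr{F}\hookrightarrow\mathscr{H}$. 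Taking the supremum over $\|\psi\|_{\mathscr{F}}=1$ and using the $\mathscr{F}$-bound from the previous step delivers $\|\partial g_\epsilon^\alpha/\partial t\|_{\mathscr{F}'}\le K_2$ uniformly in $\epsilon$.

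Once uniform boundedness in the reflexive space $\mathscr{W}$ is in hand, a subsequence $g_{\epsilon_n}^\alpha\rightharpoonup\varphi_\ast$ weakly in $\mathscr{W}$; continuity of the embedding $\mathscr{W}\hookrightarrow\mathscr{F}$ propagates this to weak $\mathscr{F}$-convergence, which by Theorem 1.2 of \cite{Oshima06} must agree with $e_g$, so $e_g=\varphi_\ast\in\mathscr{W}$. The step most likely to require care is the $\mathscr{F}'$-estimate, since the prefactor $1/\epsilon$ multiplying the penalty term is singular as $\epsilon\downarrow 0$; assumption (\ref{assumHnorm}) is precisely designed to tame this singularity, and the whole argument really hinges on that single quantitative control being strong enough to feed through both the coercive and the dual-norm computations without loss.
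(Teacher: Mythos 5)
Your proof is correct and follows essentially the same route as the paper: a uniform bound on $\|g_\epsilon^\alpha\|_{\mathscr{W}}$ --- with the $\mathscr{F}'$-estimate on $\partial g_\epsilon^\alpha/\partial t$ obtained exactly as in the paper from the penalized equation, the sector condition, and (\ref{assumHnorm}) --- followed by weak compactness in $\mathscr{W}$ and identification of the weak limit with $e_g$ via the known weak $\mathscr{F}$-convergence. The only divergence is the uniform $\mathscr{F}$-bound: the paper deduces it from the domination estimate $\sup_\epsilon\|g_\epsilon^\alpha-\varphi\|_{\mathscr{F}}\leq K_1\|\varphi\|_{\mathscr{W}}$ tied to the excessive majorant in (\ref{gbdd}), whereas you derive it self-containedly by testing (\ref{gHnorm}) with $\psi=g_\epsilon^\alpha$, using $\langle\partial g_\epsilon^\alpha/\partial t,g_\epsilon^\alpha\rangle=0$, Cauchy--Schwarz with (\ref{assumHnorm}), and coercivity of $\mathcal{A}_\alpha$; this is a valid and arguably more economical alternative for that step.
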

\begin{proof}
It has been proved that $e_g\in \mathcal{L}_g\cap P_\alpha$, and
$e_g\in\mathscr{F}$, see Theorem 1.2 of \cite{Oshima06} or
Proposition 1.7 of \cite{Stannat99}. Furthermore,
\begin{displaymath}
\sup_\epsilon\|g_\epsilon^\alpha-\varphi\|_{\mathscr{F}}\leq
K_1\|\varphi\|_{\mathscr{W}},
\end{displaymath} and
\begin{displaymath}
\sup_\epsilon \|g_\epsilon^\alpha\|_{\mathscr{F}}\leq
\sup_\epsilon\|g_\epsilon^\alpha-\varphi\|_{\mathscr{F}} +
\|\varphi\|_{\mathscr{F}}\leq
K_1\|\varphi\|_{\mathscr{W}}+\|\varphi\|_{\mathscr{F}}.
\end{displaymath} Now
that $g_\epsilon^\alpha$ satisfies
\begin{displaymath}
\langle-\frac{\partial g_\epsilon^\alpha}{\partial
t},\psi\rangle+\mathcal{A}_\alpha(g_\epsilon^\alpha,\psi)=\frac{1}{\epsilon}\left((g_\epsilon^\alpha-g)^-,\psi\right)_{\mathscr{H}},\quad
\forall \psi\in\mathscr{F},
\end{displaymath}
we have
\begin{equation}\label{supeg}\begin{split}
\|\frac{\partial g_\epsilon^\alpha}{\partial
t}\|_{\mathscr{F}'}&=\sup_{\|\psi\|_{\mathscr{F}}=1}\langle\frac{\partial
g_\epsilon^\alpha}{\partial
t},\psi\rangle\\
&=\sup_{\|\psi\|_{\mathscr{F}}=1}\left(\mathcal{A}_\alpha(g_\epsilon^\alpha,\psi)-\frac{1}{\epsilon}\left((g_\epsilon^\alpha-g)^-,\psi\right)_{\mathscr{H}}\right)\\
&\leq \sup_{\|\psi\|_{\mathscr{F}}=1}
\mathcal{A}_\alpha(g_\epsilon^\alpha,\psi)+\sup_{\|\psi\|_{\mathscr{F}}=1}\frac{1}{\epsilon}\left((g_\epsilon^\alpha-g)^-,\psi\right)_{\mathscr{H}}.
\end{split}
\end{equation}
By the sector condition,
\begin{displaymath}
\mathcal{A}_\alpha(g_\epsilon^\alpha,\psi)\leq
K_2\|g_\epsilon^\alpha\|_{\mathscr{F}}\|\psi\|_{\mathscr{F}},
\end{displaymath} hence
\begin{displaymath}
\sup_{\|\psi\|_{\mathscr{F}}=1}
\mathcal{A}_\alpha(g_\epsilon^\alpha,\psi)\leq
K_2\|g_\epsilon^\alpha\|_{\mathscr{F}}.
\end{displaymath}
On the other hand, by Cauchy-Schwarz inequality, the following
holds:
\begin{displaymath}
\frac{1}{\epsilon}\left((g_\epsilon^\alpha-g)^-,\psi\right)_{\mathscr{H}}\leq\frac{1}{\epsilon}\|g_\epsilon^\alpha-g)^-\|_{\mathscr{H}}\|\psi\|_{\mathscr{H}},
\end{displaymath}
and
\begin{displaymath}
\|\psi\|_{\mathscr{H}}\leq K_3\|\psi\|_{\mathscr{F}},
\end{displaymath}
 hence
\begin{displaymath}
\sup_{\|\psi\|_{\mathscr{F}}=1}\frac{1}{\epsilon}\left((g_\epsilon^\alpha-g)^-,\psi\right)_{\mathscr{H}}\leq
\frac{1}{\epsilon}K_3\|g_\epsilon^\alpha-g)^-\|_{\mathscr{H}}.
\end{displaymath}
Now by taking $\sup_\epsilon$ of (\ref{supeg}) and by
(\ref{assumHnorm}) we get
\begin{displaymath}
\sup_\epsilon \|\frac{\partial g_\epsilon^\alpha}{\partial
t}\|_{\mathscr{F}'} \leq K_2K_1\|\varphi\|_{\mathscr{W}} +
K_2\|\varphi\|_{\mathscr{F}} + KK_3\|g\|_{\mathscr{H}} < \infty.
\end{displaymath}
Therefore
\begin{equation}\label{egwbdd}\begin{split}\sup_\epsilon
\|g_\epsilon^\alpha\|_{\mathscr{W}}&=\sup_\epsilon
\left(\|\frac{\partial g_\epsilon^\alpha}{\partial
t}\|_{\mathscr{F}'}+\|g_\epsilon^\alpha\|_{\mathscr{F}}\right)\\
& \leq K_2K_1\|\varphi\|_{\mathscr{W}} +
K_2\|\varphi\|_{\mathscr{F}} +
KK_3\|g\|_{\mathscr{H}}+K_1\|\varphi\|_{\mathscr{W}}+\|\varphi\|_{\mathscr{F}},
\end{split}
\end{equation} and as a consequence, $e_g\in\mathscr{W}$ by Lemma I.2.12 in \cite{Ma92}.
\end{proof}

\begin{corollary} There exist  constants $K_4,K_5$ such that
$\|e_g\|_{\mathscr{W}}\leq
K_4\|\varphi\|_{\mathscr{W}}+K_5\|g\|_{\mathscr{H}}$.
\end{corollary}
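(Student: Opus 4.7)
The plan is to extract the bound directly from the computations already performed in the proof of Lemma \ref{egW}; no new ideas are required. The crucial observation is that estimate (\ref{egwbdd}) is already of the precise form claimed, once one collapses the two $\varphi$-terms into one. First, I would use $\|\varphi\|_{\mathscr{F}} \le \|\varphi\|_{\mathscr{W}}$ (immediate from the definition $\|\varphi\|_{\mathscr{W}}^2 = \|\partial_t \varphi\|_{\mathscr{F}'}^2 + \|\varphi\|_{\mathscr{F}}^2$) to rewrite (\ref{egwbdd}) as
$$\sup_{\epsilon>0}\|g_\epsilon^\alpha\|_{\mathscr{W}} \;\le\; K_4 \|\varphi\|_{\mathscr{W}} + K_5 \|g\|_{\mathscr{H}},$$
with the explicit choice $K_4 = (K_1+1)(K_2+1)$ and $K_5 = K K_3$.

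Second, I would transfer this uniform bound from the family $\{g_\epsilon^\alpha\}$ to the limit $e_g$. The proof of Lemma \ref{egW} already invokes Lemma I.2.12 of \cite{Ma92}, which provides a subsequence $g_{\epsilon_n}^\alpha$ converging weakly in $\mathscr{W}$. The weak limit must coincide with $e_g$: indeed, by Theorem 1.2 of \cite{Oshima06} we already know $g_\epsilon^\alpha \to e_g$ strongly in $\mathscr{H}$ and weakly in $\mathscr{F}$, and the continuous embedding $\mathscr{W} \hookrightarrow \mathscr{F}$ forces the weak $\mathscr{W}$-limit to agree with the weak $\mathscr{F}$-limit. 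Weak lower semicontinuity of the norm in the Banach space $\mathscr{W}$ then yields
$$\|e_g\|_{\mathscr{W}} \;\le\; \liminf_{n\to\infty}\|g_{\epsilon_n}^\alpha\|_{\mathscr{W}} \;\le\; K_4 \|\varphi\|_{\mathscr{W}} + K_5 \|g\|_{\mathscr{H}},$$
which is the claim.

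There is essentially no obstacle here; the corollary is a quantitative bookkeeping statement drawn from the proof of Lemma \ref{egW}, where all the analytic work (the sector condition, the Cauchy--Schwarz step, and hypothesis (\ref{assumHnorm})) has already been performed. The only mild subtlety is identifying the weak $\mathscr{W}$-limit with $e_g$ in the lower-semicontinuity step, but this is immediate from the compatibility with the known weak limit in $\mathscr{F}$.
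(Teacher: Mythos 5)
Your proposal is correct and follows essentially the same route as the paper, which simply cites estimate (\ref{egwbdd}) together with $\|\varphi\|_{\mathscr{F}}\leq\|\varphi\|_{\mathscr{W}}$; your version merely spells out the weak lower semicontinuity step and the identification of the weak $\mathscr{W}$-limit with $e_g$, which the paper leaves implicit.
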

\begin{proof}
This can be seen by Eq.(\ref{egwbdd}) in the proof of Lemma
\ref{egW} and the fact that $\|\varphi\|_{\mathscr{F}}\leq
\|\varphi\|_{\mathscr{W}}$.
\end{proof}

Now we can revise Theorem 1.2 of \cite{Oshima06} and get the
following result.
\begin{corollary}\label{onesided}
Under the assumptions (\ref{gbdd}) and (\ref{assumHnorm}),
$e_g=\lim_{\epsilon\to 0}g_\epsilon^\alpha$ converges increasingly,
strongly in $\mathscr{H}$, and weakly in both $\mathscr{F}$ and
$\mathscr{W}$. Furthermore, $e_g$ is the minimal function of
$\mathscr{P}_\alpha\cap\mathscr{L}_g\cap\mathscr{W}$ satisfying
\begin{equation}\label{viqnormal}
\mathcal{E}_\alpha (e_g,e_g)\leq\mathcal{E}_\alpha(e_g,\psi),\quad
\forall \psi\in\mathscr{L}_g\cap\mathscr{W}.
\end{equation}
\end{corollary}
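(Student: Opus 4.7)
The plan is to leverage Oshima's Theorem 1.2 together with the new uniform $\mathscr{W}$-bound obtained in the proof of Lemma \ref{egW}. Oshima's theorem already gives that $g_\epsilon^\alpha \to e_g$ increasingly, strongly in $\mathscr{H}$ and weakly in $\mathscr{F}$, and that $e_g$ is the minimal element of $\mathscr{P}_\alpha\cap\mathscr{L}_g$ satisfying $\mathcal{A}_\alpha(e_g,e_g)\leq\mathcal{E}_\alpha(e_g,\psi)$ for all $\psi\in\mathscr{L}_g\cap\mathscr{W}$. Thus only three things remain to be verified: weak convergence in $\mathscr{W}$, the upgrade of the left-hand side from $\mathcal{A}_\alpha$ to $\mathcal{E}_\alpha$, and the sharpening of the minimality assertion.

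First I would extract the weak $\mathscr{W}$-convergence from the bound (\ref{egwbdd}), which gives $\sup_\epsilon\|g_\epsilon^\alpha\|_{\mathscr{W}}<\infty$. Since $\mathscr{W}$ is a reflexive Banach space (this is precisely the content of Lemma I.2.12 in \cite{Ma92} invoked at the end of Lemma \ref{egW}), a Banach--Alaoglu argument produces a subsequence $g_{\epsilon_n}^\alpha \rightharpoonup v$ weakly in $\mathscr{W}$. The continuous embedding $\mathscr{W}\hookrightarrow\mathscr{H}$ forces $v$ to coincide with the strong $\mathscr{H}$-limit, which is $e_g$. Because every subsequence of $\{g_\epsilon^\alpha\}$ admits a further subsequence converging weakly in $\mathscr{W}$ to the same limit $e_g$, the whole net converges weakly in $\mathscr{W}$ to $e_g$, as required.

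Next I would obtain the variational inequality (\ref{viqnormal}). Since $e_g\in\mathscr{W}$ by Lemma \ref{egW}, the identity $\langle\partial\varphi/\partial t,\varphi\rangle=0$ for $\varphi\in\mathscr{W}$ (see the remark following the definition of $\mathcal{A}$, citing Corollary 1.1 in \cite{Oshima04}) gives $\mathcal{E}_\alpha(e_g,e_g)=\mathcal{A}_\alpha(e_g,e_g)$. Substituting this identity into Oshima's inequality $\mathcal{A}_\alpha(e_g,e_g)\leq\mathcal{E}_\alpha(e_g,\psi)$ immediately produces (\ref{viqnormal}). Finally, minimality within the smaller class $\mathscr{P}_\alpha\cap\mathscr{L}_g\cap\mathscr{W}$ is automatic: Lemma \ref{egW} places $e_g$ in this class, and any competitor in the smaller class is also a competitor in the larger class $\mathscr{P}_\alpha\cap\mathscr{L}_g$ where $e_g$ is already known to be minimal.

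The only genuine technical step is the reflexivity/weak compactness of $\mathscr{W}$ used to upgrade the mode of convergence; since this is packaged in Lemma I.2.12 of \cite{Ma92}, the obstacle is mild, and the corollary is essentially a re-packaging of Oshima's theorem under the stronger hypotheses (\ref{gbdd}) and (\ref{assumHnorm}).
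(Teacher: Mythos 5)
Your proposal is correct and follows essentially the same route as the paper: the paper's proof likewise imports Oshima's Theorem 1.2 for the convergence and minimality statements, uses the membership $e_g\in\mathscr{W}$ from Lemma \ref{egW} (with the uniform bound (\ref{egwbdd}) and Lemma I.2.12 of \cite{Ma92} supplying the weak $\mathscr{W}$-convergence), and upgrades $\mathcal{A}_\alpha(e_g,e_g)$ to $\mathcal{E}_\alpha(e_g,e_g)$ via the identity $\langle\partial e_g/\partial t, e_g\rangle=0$. You merely spell out the weak-compactness and minimality steps that the paper leaves implicit.
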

\begin{proof}
Now that $e_g\in\mathscr{W}$, so $\langle\frac{\partial
e_g}{\partial t},e_g\rangle=0$ (see Lemma 1.1 of \cite{Oshima04}),
hence $\mathcal{A}_\alpha(e_g,e_g)=\mathcal{E}_\alpha(e_g,e_g).$ The
rest of the proof is the same as in \cite{Oshima06}.
\end{proof}

\begin{theorem}\label{optstop} Let $g(z)=g(t,x)$ be a finely continuous function satisfying (\ref{gbdd}).
Assume (\ref{assumHnorm}) and the absolute continuity condition
(\ref{abscont}). Let $e_g$ be the solution of (\ref{viqnormal}), and
$\tilde{e}_g$ be its $\alpha$-excessive regularization. Then
\begin{equation}
\tilde{e}_g(z)=\sup_\sigma J_z(\sigma),\quad \forall
z=(s,x)\in\mathbb{Z},
\end{equation}  where
$J_{z}(\sigma)=J_{(s,x)}(\sigma)=E_{(s,x)}(e^{-\alpha\sigma}g(s+\sigma,X_{s+\sigma}))$.
Furthermore, let the set
$B=\{z\in{\mathbb{Z}}:\tilde{e}_g(z)=g(z)\}$ and let $\sigma_{{B}}$
be the first hitting time of $B$ defined by $\sigma_{{B}}=\inf\{t>
0:\tilde{e}_g(Z_{s+t})=g(Z_{s+t})\}$, then
\begin{equation}\label{optform}
\tilde{e}_g(z)=E_z[e^{-\alpha\sigma_{{B}}} g(Z_{s+\sigma_{{B}}})].
\end{equation}
\end{theorem}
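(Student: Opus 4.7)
My plan is to reduce to Oshima's Theorem 3.1 in \cite{Oshima06}, which establishes both identities quasi-everywhere, and then to remove the exceptional set using the absolute continuity condition (\ref{abscont}), in the spirit of the Fukushima--Menda \cite{Fuku06} extension of the time-homogeneous theory.

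First I would verify that all hypotheses of Oshima's identification are in force. By Lemma \ref{egW} together with Corollary \ref{onesided}, $e_g$ lies in $\mathscr{P}_\alpha\cap\mathscr{L}_g\cap\mathscr{W}$ and is the minimal solution of (\ref{viqnormal}); the assumption $g\in\mathscr{W}$ together with the $\alpha$-excessive majorant $\varphi$ from (\ref{gbdd}) supplies the integrability needed to run the Snell-envelope comparison. Consequently
\[
\tilde{e}_g(z)=\sup_\sigma J_z(\sigma) \quad\text{and}\quad \tilde{e}_g(z)=E_z\bigl[e^{-\alpha\sigma_B}g(Z_{s+\sigma_B})\bigr]
\]
hold for quasi every $z\in\mathbb{Z}$.

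The next step is to promote these q.e.\ identities to pointwise identities on all of $\mathbb{Z}$. Let $N$ be the exceptional set outside which they already hold. The absolute continuity condition (\ref{abscont}) transfers to the time--space process $Z_t=(\tau(t),X_t)$, since uniform motion in $t$ preserves absolute continuity of the $x$-marginal, so the transition kernel of $Z_t$ is absolutely continuous with respect to $\nu=dt\otimes d{\bf m}$. By the standard Fukushima--Menda argument adapted to this time-space setting, every exceptional set is then polar: $P_z(\sigma_N<\infty)=0$ for \emph{every} starting point $z$, not merely q.e. Moreover, both candidates are finely continuous on all of $\mathbb{Z}$: $\tilde{e}_g=\lim_n nR_{n+\alpha}e_g$ by construction as an $\alpha$-excessive regularization, and $v(z):=\sup_\sigma J_z(\sigma)$ because the Snell envelope of a finely continuous reward is itself finely continuous (using the strong Markov property and dominated convergence justified by $|g|\leq\varphi$). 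Since two finely continuous functions that agree off a polar set must agree everywhere, we conclude $\tilde{e}_g\equiv v$ on $\mathbb{Z}$.

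For the hitting-time representation (\ref{optform}) I would argue by the usual supermartingale identity. Because $\tilde{e}_g$ is $\alpha$-excessive, $M_t:=e^{-\alpha t}\tilde{e}_g(Z_{s+t})$ is a $P_z$-supermartingale, which gives $\tilde{e}_g(z)\geq E_z[e^{-\alpha\sigma}g(Z_{s+\sigma})]$ for every stopping time $\sigma$. Taking $\sigma=\sigma_B$ and using that $\tilde{e}_g=g$ on $B$ yields the inequality ``$\leq$'' in (\ref{optform}); the matching ``$\geq$'' follows because on $\mathbb{Z}\setminus B$ the function $\tilde{e}_g$ satisfies the $\alpha$-harmonic equation, so $M_{t\wedge\sigma_B}$ is in fact a uniformly integrable martingale. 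The main obstacle I anticipate is the transfer in the second paragraph: turning $p_t(x,\cdot)\ll{\bf m}$ on $\mathbb{X}$ into a genuine polarity statement for exceptional sets of the time--space process under the non-symmetric form $\mathcal{E}$. In the time-homogeneous setting this is the classical Fukushima--Menda argument, but here one must check that the generalized Dirichlet form framework and the $\partial/\partial t$ term in $\mathcal{E}$ do not obstruct the implication ``zero capacity $\Rightarrow$ polar'' for $Z_t$. Once that bridge is established, the upgrade from q.e.\ to everywhere, and hence both conclusions of the theorem, follow without further work.
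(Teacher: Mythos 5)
Your high-level strategy (get the identities q.e.\ from Oshima and then use the absolute continuity condition to delete the exceptional set) is reasonable, but the proposal leaves the crux of the theorem unproved. The entire content of this result beyond \cite{Oshima06} is precisely the bridge you defer to the end: that in the \emph{time-dependent} (non-symmetric, generalized) Dirichlet form setting, a set of zero capacity for $\mathcal{E}_\alpha$ is avoided by $Z_t$ from \emph{every} starting point. You assert this ``by the standard Fukushima--Menda argument adapted to this time-space setting'' and then explicitly concede that one must still check the $\partial/\partial t$ term does not obstruct it; that is not a proof, it is a restatement of what needs to be shown. Your upgrade step also needs fine continuity of $v(z)=\sup_\sigma J_z(\sigma)$ at \emph{every} $z$ (not q.e.), which you justify only by the one-line claim that Snell envelopes of finely continuous rewards are finely continuous --- in this generality that is essentially as hard as the theorem itself. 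Finally, in your last paragraph the claim that $\tilde{e}_g$ is $\alpha$-harmonic on $B^c$ (so that $M_{t\wedge\sigma_B}$ is a martingale) is exactly where the variational inequality (\ref{viqnormal}) must enter, and you never use it.

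The paper does not prove ``zero capacity $\Rightarrow$ polar'' abstractly; it sidesteps that by working with objects defined pointwise from the start. Concretely: (i) since $e_g\geq g$ $\nu$-a.e.\ and the resolvent kernel is absolutely continuous, $nR_{n+\alpha}e_g(z)\geq nR_{n+\alpha}g(z)$ for every $z$, and fine continuity of $g$ plus dominated convergence (using $|g|\leq\varphi$) gives $\lim_n nR_{n+\alpha}g(z)=g(z)$ everywhere, hence $\tilde{e}_g\geq g$ and then $\tilde{e}_g\geq\sup_\sigma J_z(\sigma)$ everywhere by the supermartingale inequality; (ii) under (\ref{abscont}) the potential $\tilde{e}_g=R_\alpha\mu^\alpha$ is represented by a positive continuous additive functional \emph{in the strict sense} (Theorem 5.1.6 of \cite{Fuku11}), valid for every $z$, and the variational inequality forces $\mathcal{E}_\alpha(e_g,e_g-g)=0$, hence $\mu^\alpha(B^c)=0$, hence $E_z\bigl[\int_0^{\sigma_B}e^{-\alpha t}\,dA_t\bigr]=0$ for every $z$, which yields (\ref{optform}) by the strong Markov property. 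If you want to salvage your route, you would need to actually prove the polarity statement and the everywhere fine continuity of the value function in the time-dependent framework; as written, the proposal assumes the hardest step.
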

\begin{proof}
Notice that $\varphi\wedge \tilde{e}_g$ is an $\alpha$-potential
dominating $g$, and $\tilde{e}_g$ is the smallest $\alpha$-potential
dominating $g$, we get $\tilde{e}_g\leq \varphi\wedge
\tilde{e}_g\leq \varphi$ $\nu$-a.e., which implies the finiteness of
$\tilde{e}_g$.

Now because $e_g\geq g$ $\nu$ a.e., we have
 $nR_{n+\alpha} e_g(z)\geq
nR_{n+\alpha}g(z)$, $\forall z\in\mathbb{Z},\ n>0$, and this implies
\begin{displaymath}
\tilde{e}_g(z)\geq \lim_{n\to\infty} nR_{n+\alpha}g(z),\quad \forall
z\in\mathbb{Z}.
\end{displaymath} By the absolute continuity condition and applying the dominated convergence theorem, the following
holds,
\begin{displaymath}
\lim_{n\to\infty} nR_{n+\alpha}g(z)=g(z),\quad \forall
z\in\mathbb{Z},
\end{displaymath} therefore $\tilde{e}_g(z)\geq g(z)$,
$\forall z\in\mathbb{Z}$. Then we have
\begin{equation}\label{esup}
\tilde{e}_g(z)\geq
E_z\left(e^{-\alpha\sigma}\tilde{e}_g(Z_{s+\sigma})\right)\geq
E_z\left(e^{-\alpha\sigma}g(Z_{s+\sigma})\right),
\end{equation} for any
stopping time $\sigma$,
 which implies
$\tilde{e}_g(z)\geq J_z(\sigma)$, $\forall z\in\mathbb{Z}$, hence
$\tilde{e}_g(z)\geq \sup_\sigma J_z(\sigma)$, $\forall
z\in\mathbb{Z}$.


Since $e_g$ is a bounded $\alpha$-potential, there exists a positive
Radon measure $\mu^\alpha$ of finite energy such that
\begin{equation}
\mathcal{E}_\alpha(e_g,w)=\int_\mathbb{Z} w(z)\mu^\alpha(dz),\quad
\forall w\in C_0(\mathbb{Z})\cap\mathscr{W},
\end{equation} and $\tilde{e}_g(z)=R_\alpha \mu^\alpha(z)$.

Under the absolute continuity condition (\ref{abscont}) of the
transition function, there exists a positive continuous additive
functional $A_t$ in the strict sense (see Theorem 5.1.6 in
\cite{Fuku11}) such that
\begin{displaymath}
\tilde{e}_g(z)=E_z\left(\int_0^\infty e^{-\alpha t}dA_t\right),\quad
\forall z\in\mathbb{Z}.
\end{displaymath}

Set $B=\{z\in\mathbb{Z}:\tilde{e}_g(z)=g(z)\}$, then
\begin{displaymath}
\int_{B^c}\left(\tilde{e}_g(z)-g(z)\right)\mu^\alpha(dz)=\int_{\mathbb{Z}}\left(\tilde{e}_g(z)-g(z)\right)\mu^\alpha(dz)
=\mathcal{E}_\alpha(e_g,e_g-g).
\end{displaymath}
Since $e_g$ is an $\alpha$-potential, and $e_g-g$ is nonnegative,
$\mathcal{E}_\alpha(e_g,e_g-g)\geq 0$, which implies
$\mathcal{E}_\alpha(e_g,e_g)-\mathcal{E}_\alpha(e_g,g)\geq 0$. On
the other hand, $e_g$ satisfies (\ref{viqnormal}), which implies
$\mathcal{E}_\alpha(e_g,e_g)-\mathcal{E}_\alpha(e_g,g)\leq 0$. Now
it can be concluded that
$\mathcal{E}_\alpha(e_g,e_g)-\mathcal{E}_\alpha(e_g,g)= 0$, hence
$\mu^\alpha(B^c)=0$. Further we get
\begin{displaymath}
E_z\left(\int_0^\infty e^{-\alpha
t}I_{B^c}(Z_{s+t})dA_t\right)=R_\alpha(I_{B^c}\mu)(z)=0,\quad\forall
z\in\mathbb{Z}.
\end{displaymath}
By the strong Markov property, we have for any stopping time
$\sigma\leq\sigma_B$
\begin{equation}\label{sMark}
\tilde{e}_g(z)=E_z\left[\int_0^\sigma e^{-\alpha
t}dA_t\right]+E_z[e^{-\alpha\sigma}\tilde{e}_g(Z_{s+\sigma})],
\end{equation} and because
\begin{displaymath}
0\leq E_z\left[\int_0^\sigma e^{-\alpha t}dA_t\right]\leq
E_z\left(\int_0^\infty e^{-\alpha t}I_{B^c}(Z_{s+t})dA_t\right)=0,
\end{displaymath} we have
$\tilde{e}_g(z)=E_z[e^{-\alpha\sigma}\tilde{e}_g(Z_{s+\sigma})]$,
$\sigma\leq \sigma_B$. By replacing $\sigma$ by $\sigma_B$ and
replacing $\tilde{e}_g(Z_{s+\sigma})$ by $g(Z_{s+\sigma_B})$, we get
$\tilde{e}_g(z)=E_z[e^{-\alpha\sigma_B}g(Z_{s+\sigma_B})]$, and this
together with (\ref{esup}) completes the proof.
\end{proof}

\begin{corollary}
Under the conditions in Theorem \ref{optstop}, $\tilde{e}_g(z)$ is
finely and cofinely continuous for all $z\in\mathbb{Z}$.
\end{corollary}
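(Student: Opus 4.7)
The strategy is to upgrade Theorem~3.1 of \cite{Oshima06}, which establishes fine and cofine continuity of $\tilde{e}_g$ only quasi-everywhere, to an everywhere statement by exploiting the absolute continuity condition (\ref{abscont}). The argument parallels the reasoning of Fukushima--Menda \cite{Fuku06} in the time-homogeneous case, but must now be carried out both on the time-space process $Z_t$ and on its dual.

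For the fine continuity part, I would rely on the two representations already established in the proof of Theorem~\ref{optstop}: the $\alpha$-potential representation $\tilde{e}_g = R_\alpha\mu^\alpha$ with $\mu^\alpha$ a positive Radon measure of finite energy, and the PCAF representation $\tilde{e}_g(z) = E_z\!\left[\int_0^\infty e^{-\alpha t}\,dA_t\right]$ with $A_t$ a positive continuous additive functional in the strict sense (whose existence under (\ref{abscont}) is Theorem~5.1.6 of \cite{Fuku11}). Since (\ref{abscont}) lifts to absolute continuity in the spatial coordinate of the transition function of $Z_t$, the $\alpha$-potential of a PCAF in the strict sense is $\alpha$-excessive in the strict sense, and by the standard theory of right processes any such function is finely continuous at every $z \in \mathbb{Z}$.

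For the cofine continuity part, I would pass to the dual time-space process $\hat{Z}_t$, associated with the adjoint form obtained from $\mathcal{E}$ by interchanging its arguments (equivalently, by reversing the uniform motion $\tau(t)$). Because each $(E^{(t)},F)$ is $m$-symmetric, the spatial part of the co-transition function of $Z_t$ coincides with $p_t$, so (\ref{abscont}) transfers verbatim to $\hat{Z}_t$. The same scheme as above, now on the dual side, would then identify $\tilde{e}_g$ with a strict-sense $\alpha$-co-potential, yielding cofine continuity at every $z\in\mathbb{Z}$.

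The main obstacle is the cofine side: one must verify that the Revuz correspondence and the strict-sense smoothness of $\mu^\alpha$ survive under the dual form, so that $\tilde{e}_g$ genuinely admits a strict-sense co-potential representation rather than only a quasi-everywhere one. Once this duality ingredient is in place, the removal of the exceptional set is symmetric on the two sides, and the corollary follows directly from the q.e.\ statement in \cite{Oshima06}.
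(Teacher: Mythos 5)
Your plan is far more elaborate than what the paper actually does. The paper's entire proof is two lines: Oshima \cite{Oshima06} gives fine and cofine continuity for q.e.\ $z$, and Theorem \ref{optstop} has already removed the exceptional set (via the everywhere-valid identification $\tilde{e}_g(z)=\sup_\sigma J_z(\sigma)$ and the strict-sense PCAF representation $\tilde{e}_g(z)=E_z[\int_0^\infty e^{-\alpha t}\,dA_t]$ obtained from Theorem 5.1.6 of \cite{Fuku11} under (\ref{abscont})), so the q.e.\ statement upgrades to an everywhere statement. Your fine-continuity half is essentially a re-derivation of that ingredient of Theorem \ref{optstop}: the strict $\alpha$-potential representation is exactly what the paper uses, so that part of your route is sound and matches the underlying mechanism, even though the paper packages it as a citation rather than an argument.

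The divergence is on the cofine side, and there you have correctly identified the real difficulty but not resolved it. Passing to the dual time-space process $\hat{Z}_t$ (time reversed, spatial part unchanged by $m$-symmetry of each $(E^{(t)},F)$) is the right idea, but your plan stops at the point where the work begins: you would need to show that $\mu^\alpha$ is smooth in the strict sense for the dual process and that $\tilde{e}_g$ coincides everywhere (not just q.e.) with the corresponding strict $\alpha$-co-potential, since fine continuity for $Z_t$ does not transfer to fine continuity for $\hat{Z}_t$ without such a representation. As written, your proposal proves fine continuity everywhere but only cofine continuity q.e., which is weaker than the stated corollary. It is worth noting that the paper's own proof does not confront this issue either --- it simply asserts that the absence of the exceptional set settles both directions --- so your explicit flagging of the duality obstacle is, if anything, a more candid account of what remains to be checked; but it is a gap, not a completed step.
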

\begin{proof}
Oshima \cite{Oshima06} has showed that $\tilde{e}_g(z)$ is finely
and cofinely continuous for q.e. z, and under the  conditions in
Theorem \ref{optstop}, we showed that there does not exist the
exceptional set, so $\tilde{e}_g(z)$ is finely and cofinely
continuous for all $z\in\mathbb{Z}$.
\end{proof}
\begin{remark}
Since $X_t$ is a diffusion process, we can see that $\tilde{e}_g(z)$
is continuous along the sample paths, and if $X_t$ is a
non-degenerate Ito diffusion, $\tilde{e}_g(z)$ is continuous. This
gives an alternate proof of the continuity of the value function,
while Palczewski and Stettner \cite{Palczewski10} used a penalty
method to prove it.
\end{remark}
In Palczewski and Stettner's work
\cite{Palczewski10}\cite{Palczewski11}, the optimal policy is to
stop the game at the stopping time $\dot{\sigma}_B=\inf\{t\geq
0:\tilde{e}_g(Z_{s+t})\leq g(Z_{s+t})\}$ or equivalently
$\dot{\sigma}_B=\inf\{t\geq 0:\tilde{e}_g(Z_{s+t})= g(Z_{s+t})\}$.
Notice that $\dot{\sigma}_B\leq\sigma_B$, by Theorem \ref{optstop}
we can see that
\begin{displaymath}
\tilde{e}_g(z)\geq
E_z[e^{-\alpha\dot{\sigma}_B}\tilde{e}_g(Z_{s+\dot{\sigma}_B})]\geq
E_z[e^{-\alpha{\sigma}_B}\tilde{e}_g(Z_{s+{\sigma}_B})]\geq
E_z[e^{-\alpha \sigma_B}g(Z_{s+\sigma_B})]=\tilde{e}_g(z),\ \forall
z,
\end{displaymath} hence
\begin{displaymath}
E_z[e^{-\alpha\dot{\sigma}_B}\tilde{e}_g(Z_{s+\dot{\sigma}_B})]=
E_z[e^{-\alpha{\sigma}_B}\tilde{e}_g(Z_{s+{\sigma}_B})],\ \forall z,
\end{displaymath}
and as a byproduct, we get the following result:
\begin{corollary}
Under the conditions in Theorem \ref{optstop}, there does not exist
the exceptional set of irregular boundary points of $B$.
\end{corollary}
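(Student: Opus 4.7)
The plan is to combine the equality
\[
E_z\bigl[e^{-\alpha\dot{\sigma}_B}\tilde{e}_g(Z_{s+\dot{\sigma}_B})\bigr]=E_z\bigl[e^{-\alpha\sigma_B}\tilde{e}_g(Z_{s+\sigma_B})\bigr],\quad\forall z\in\mathbb{Z},
\]
derived in the discussion just before the statement, with the fact that both $\tilde{e}_g$ and $g$ are finely continuous on all of $\mathbb{Z}$ (by the preceding corollary and by hypothesis, respectively). First I would observe that $\tilde{e}_g-g$ is finely continuous and nonnegative on $\mathbb{Z}$, so $B=(\tilde{e}_g-g)^{-1}(\{0\})$ is the pre-image of a closed set under a finely continuous function, hence a finely closed Borel subset. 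By the classical theory of Markov processes, the set of irregular boundary points of a finely closed Borel set is semi-polar, and under the absolute continuity hypothesis (\ref{abscont}) every semi-polar set is polar; so a priori the exceptional set of irregular boundary points is at most polar.

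To upgrade ``polar'' to ``empty'', I would apply the displayed equality pointwise at a hypothetical irregular $z_0\in B$. Because $z_0\in B$, $\dot{\sigma}_B=0$ $P_{z_0}$-a.s., and the equality reduces to $\tilde{e}_g(z_0)=E_{z_0}[e^{-\alpha\sigma_B}\tilde{e}_g(Z_{s+\sigma_B})]$. I would then decompose the right side on $\{\sigma_B=0\}$ (where $Z_{s+\sigma_B}=z_0$ by right-continuity) and $\{\sigma_B>0\}$, and bring in the additive functional representation $\tilde{e}_g(z)=E_z[\int_0^\infty e^{-\alpha t}dA_t]$ together with the complementary-slackness relation $\mu^\alpha(B^c)=0$, both established in the proof of Theorem \ref{optstop}. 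These imply that $A_t$ cannot charge the excursion $[0,\sigma_B)$ into $B^c$, so the supermartingale $e^{-\alpha t}\tilde{e}_g(Z_{s+t})$ is actually a martingale on that excursion; combining with the fine continuity of $\tilde{e}_g-g$ at $z_0$ (which forces $\tilde{e}_g(Z_{s+t})-g(Z_{s+t})\to 0$ $P_{z_0}$-a.s.\ as $t\downarrow 0$) and the absolute continuity of $p_t$, one contradicts the hypothesis $P_{z_0}(\sigma_B>0)>0$.

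The main obstacle is precisely this last conversion from an expectation identity to the pathwise statement $P_{z_0}(\sigma_B=0)=1$. The supermartingale inequality by itself is generically non-strict, and so would leave room for an excursion of positive duration into $B^c$; what rules it out is the interplay between the fine continuity of $\tilde{e}_g$ and $g$ (forcing the difference to vanish in the fine limit at $z_0$) and the absolute continuity of $p_t$ (eliminating any residual semi-polar defect on $\partial B$). A cleaner, if less direct, route is to adapt the Fukushima--Menda argument of \cite{Fuku06} verbatim to the time-space process $Z_t$, relying on the fact that under (\ref{abscont}) the process satisfies Hunt's hypothesis (H) so that each finely closed $B$ admits no semi-polar defect within itself.
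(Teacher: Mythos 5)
Your proposal diverges from the paper's argument and, as you yourself concede, does not close. Two concrete problems. First, the step ``under the absolute continuity hypothesis (\ref{abscont}) every semi-polar set is polar'' is not available here: that implication is Hunt's hypothesis (H), a genuinely stronger property than absolute continuity of $p_t$, and for the time--space process $Z_t=(\tau(t),X_t)$ it fails outright. The process $Z_t$ is not symmetric (it carries uniform drift in $t$), and a slice $\{t_0\}\times\mathbb{X}$ is semipolar --- every one of its points is irregular for it, since the time coordinate never returns --- yet it is hit by every path started before time $t_0$, so it is certainly not polar. So your ``a priori the exceptional set is at most polar'' reduction is unjustified for this process. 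Second, the pointwise contradiction at a hypothetical irregular $z_0\in B$ never materializes: the identity $\tilde{e}_g(z_0)=E_{z_0}[e^{-\alpha\sigma_B}\tilde{e}_g(Z_{s+\sigma_B})]$ is exactly the statement that $e^{-\alpha t}\tilde{e}_g(Z_{s+t})$ is a martingale up to $\sigma_B$ (which follows from $\mu^\alpha(B^c)=0$ and was already established in the proof of Theorem \ref{optstop}); it is perfectly consistent with $P_{z_0}(\sigma_B>0)=1$, and neither the fine continuity of $\tilde{e}_g-g$ at $z_0$ nor absolute continuity of $p_t$ converts it into $P_{z_0}(\sigma_B=0)=1$. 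You flag this as the ``main obstacle,'' and it remains one: no contradiction is actually derived.

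The paper takes a different and much shorter route that avoids pathwise regularity analysis altogether. Since Theorem \ref{optstop} gives $\tilde{e}_g(z)=E_z[e^{-\alpha\sigma_B}g(Z_{s+\sigma_B})]$ for \emph{every} $z\in\mathbb{Z}$ (no exceptional starting points), and $\tilde{e}_g$ is $\alpha$-excessive with $\tilde{e}_g\geq g$ everywhere, one sandwiches
\begin{displaymath}
\tilde{e}_g(z)\geq E_z[e^{-\alpha\dot{\sigma}_B}\tilde{e}_g(Z_{s+\dot{\sigma}_B})]\geq E_z[e^{-\alpha\sigma_B}\tilde{e}_g(Z_{s+\sigma_B})]\geq E_z[e^{-\alpha\sigma_B}g(Z_{s+\sigma_B})]=\tilde{e}_g(z),
\end{displaymath}
forcing equality throughout for every $z$; the corollary is then read off as a byproduct of the fact that the ``$t\geq 0$'' and ``$t>0$'' entry times give the same value from every starting point, so no exceptional set of irregular points of $B$ can intervene. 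If you want to salvage your approach, you would need to either prove the regularity of points of $B$ directly (which the paper does not attempt and which your sketch does not achieve) or adopt this sandwich argument, whose whole force comes from the everywhere-validity of (\ref{optform}) rather than from any semipolar/polar dichotomy.
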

Therefore it is feasible to replace $\sigma_B$ by $\dot{\sigma}_B$
in the results in the rest of this paper.

\begin{remark}
In condition (\ref{assumHnorm}) which is used to characterize the
properties of the value function of optimal stopping,
$g_\epsilon^\alpha$ solves a PDE which involves the generator of the
stochastic process, and the part $(g_\epsilon^\alpha-g)^-$ involves
the function $g$. Since the optimal stopping problem relies on an
underlying process ${\bf M}$ and the reward function $g$, condition
(\ref{assumHnorm}) makes much sense.
\end{remark}

\subsection{The Time Inhomogeneous Zero-sum Game}

In this section we will refine the solution of the two-obstacle
problem (zero-sum game) in\cite{Oshima06}.

Let $g(t,x),h(t,x)\in\mathscr{W}$ be finely continuous functions
satisfying \begin{equation}\label{bddgh}g(t,x)\leq h(t,x), \quad
|g(t,x)|\leq\varphi(t,x), \quad |h(t,x)|\leq \psi(t,x),\ \forall
(t,x)\in\mathbb{Z},\end{equation} where $\varphi,\psi\in\mathscr{W}$
are two bounded $\alpha$-excessive functions. We also assume that
$g,h$ satisfy the condition (\ref{assumHnorm}). Suppose there exist
bounded $\alpha$-excessive functions
$v_1(t,x),v_2(t,x)\in\mathscr{W}$ such that
\begin{equation}\label{sepcond} g(t,x)\leq v_1(t,x)-v_2(t,x)\leq
h(t,x),\ \forall (t,x)\in\mathbb{Z},
\end{equation} in which case we say $g$ and $h$ satisfy the \emph{separability
condition} \cite{Fuku06}.

Define the sequences of $\alpha$-excessive functions $\{\varphi_n\}$
and $\{\psi_n\}$ inductively by
\begin{displaymath}
\varphi_0=\psi_0=0, \psi_n=e_{\varphi_{n-1}-h},
\varphi_n=e_{\psi_{n}+g},\ \ n\geq 1,
\end{displaymath} then the following holds:
\begin{lemma}
Assuming (\ref{sepcond}), then $\varphi_n,\psi_n$ are well defined
and $\lim_{n\to\infty}\varphi_n=\bar{\varphi}$,
$\lim_{n\to\infty}\psi_n=\bar{\psi}$ converge increasingly, strongly
in $\mathscr{H}$ and weakly in both $\mathscr{F}$ and $\mathscr{W}$.
\end{lemma}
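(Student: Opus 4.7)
The plan is to run a single joint induction that simultaneously produces well-definedness, a uniform sandwich, and monotonicity of the two sequences, after which the convergence assertions follow from weak compactness in $\mathscr{W}$. First I would show by induction that each $\psi_n,\varphi_n$ is a well-defined element of $\mathscr{W}$ and satisfies $0\leq \psi_n\leq v_2$ and $0\leq \varphi_n\leq v_1$. The upper bounds use monotonicity of the map $f\mapsto e_f$ combined with the separability condition (\ref{sepcond}): if $\varphi_{n-1}\leq v_1$, then $\varphi_{n-1}-h\leq v_1-h\leq v_2$, and since $v_2$ is $\alpha$-excessive and lies in $\mathscr{W}$, one has $v_2\in\mathscr{P}_\alpha\cap\mathscr{L}_{\varphi_{n-1}-h}$, which forces the minimal element $\psi_n=e_{\varphi_{n-1}-h}$ to satisfy $\psi_n\leq v_2$; the analogous computation $\psi_n+g\leq v_2+g\leq v_1$ then gives $\varphi_n\leq v_1$. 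The bounds $|g|\leq \varphi$ and $|h|\leq \psi$ supply a finite $\alpha$-excessive majorant in $\mathscr{W}$ for each shifted obstacle, so hypothesis (\ref{gbdd}) is in force and Corollary \ref{onesided} applies, placing $\psi_n,\varphi_n$ in $\mathscr{W}$ at each stage.

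Next I would establish $\psi_n\leq \psi_{n+1}$ and $\varphi_n\leq \varphi_{n+1}$. Because $e_f$ is monotone in $f$, the inductive step is automatic: $\varphi_{n-1}\leq \varphi_n$ gives $\varphi_{n-1}-h\leq \varphi_n-h$ and hence $\psi_n\leq \psi_{n+1}$, which in turn gives $\psi_n+g\leq \psi_{n+1}+g$ and $\varphi_n\leq \varphi_{n+1}$. The base case $0\leq \psi_1$ and $0\leq \varphi_1$ I would handle by viewing $\psi_1,\varphi_1$ as limits of the penalized approximants $g_\epsilon^\alpha$ from Lemma \ref{lemmagep}; a direct inspection of (\ref{gepsilon}) shows that starting from the nonnegative initialization and using the negative part of the penalty preserves nonnegativity, so the limits inherit $\psi_1,\varphi_1\geq 0$. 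Combined with the sandwich from the previous paragraph, this gives pointwise increasing limits $\bar\varphi(z)=\lim_n\varphi_n(z)\in[0,v_1(z)]$ and $\bar\psi(z)=\lim_n\psi_n(z)\in[0,v_2(z)]$, and since $v_1,v_2\in\mathscr{H}$, dominated convergence delivers strong convergence $\varphi_n\to\bar\varphi$, $\psi_n\to\bar\psi$ in $\mathscr{H}$.

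For the weak convergence in $\mathscr{F}$ and $\mathscr{W}$, I would apply the corollary following Lemma \ref{egW} to each obstacle to obtain bounds of the form $\|\psi_n\|_{\mathscr{W}}\leq K_4\|v_2\|_{\mathscr{W}}+K_5\|\varphi_{n-1}-h\|_{\mathscr{H}}$ and the analogous bound for $\varphi_n$. Because $\varphi_{n-1}$ and $\psi_n$ are already $\mathscr{H}$-bounded by the sandwich, these give $\sup_n\|\psi_n\|_{\mathscr{W}}+\sup_n\|\varphi_n\|_{\mathscr{W}}<\infty$; reflexivity of the Banach spaces $\mathscr{F}$ and $\mathscr{W}$ then produces weakly convergent subsequences, and because the $\mathscr{H}$-limits are already $\bar\varphi,\bar\psi$, a standard argument forces the full sequences to converge weakly to those limits in both $\mathscr{F}$ and $\mathscr{W}$, exactly as at the end of the proof of Lemma \ref{egW} via Lemma I.2.12 of \cite{Ma92}. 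The main obstacle I anticipate is twofold: transferring assumption (\ref{assumHnorm}) to the shifted obstacles $\varphi_{n-1}-h$ and $\psi_n+g$ with a constant independent of $n$ so that the $\mathscr{W}$-bound is genuinely uniform, and pinning down the nonnegativity of $\psi_1,\varphi_1$ to launch the monotonicity induction; both reduce to careful bookkeeping with the penalty equation (\ref{gepsilon}) under the structural bounds provided by $v_1,v_2,\varphi,\psi$, rather than to any new analytic input.
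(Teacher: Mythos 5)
Your proposal follows essentially the same route as the paper: an induction using the separability condition and the minimality of $e_f$ to get well-definedness, the sandwich $\psi_n\leq v_2$, $\varphi_n\leq v_1$, and monotonicity; then the uniform $\mathscr{W}$-bound from the corollary to Lemma \ref{egW} (with $\|\psi_n+g\|_{\mathscr{H}}$ controlled by the sandwich); and finally Lemma I.2.12 of \cite{Ma92} for the weak convergence, with the strong $\mathscr{H}$-convergence deferred to the time-homogeneous-style argument of Oshima's Lemma 2.1. The uniformity issue you flag for transferring (\ref{assumHnorm}) to the shifted obstacles $\varphi_{n-1}-h$ and $\psi_n+g$ is a legitimate point that the paper itself passes over silently, so your treatment is, if anything, more careful on that score.
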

\begin{proof} We only need to show the convergence in $\mathscr{W}$
and the rest of this lemma is just Lemma 2.1 in \cite{Oshima06}.
Firstly $\varphi_0=0\leq v_1$ and $\varphi_0\in\mathscr{W}$. Suppose
$\varphi_{n-1}\in\mathscr{W}$ is well defined and satisfies
$\varphi_{n-1}\leq v_1$, then $\varphi_{n-1}-h\leq v_1-h\leq v_2$.
Hence $\psi_n=e_{\varphi_{n-1}-h}\in\mathscr{W}$ is well defined by
Lemma \ref{egW},  and we also have $\psi_n\leq v_2$ since
$e_{\varphi_{n-1}-h}$ is the smallest $\alpha$-potential dominating
$\varphi_{n-1}-h$. Now that $\psi_n+g\leq v_2 + g\leq v_1$, hence
$\varphi_n=e_{\psi_{n}+g}\in \mathscr{W}$ is well defined and is
dominated by $v_1$.

Notice that $\varphi_0\leq\varphi_1$. Suppose $\varphi_{n-1}\leq
\varphi_n$, then $\psi_n=e_{\varphi_{n-1}-h}\leq
e_{\varphi_{n}-h}=\psi_{n+1}$, hence $\varphi_n=e_{\psi_{n}+g}\leq
e_{\psi_{n+1}+g}=\varphi_{n+1}$. Also by Lemma \ref{egW} we get
\begin{displaymath}
\|\varphi_n\|_{\mathscr{W}}=\|e_{\psi_n+g}\|_{\mathscr{W}}\leq
K_4\|v_1\|_{\mathscr{W}}+K_5\|\psi_n+g\|_{\mathscr{H}}.
\end{displaymath} Notice that $g\leq \psi_n+g\leq v_1$, hence
$\|\psi_n+g\|_{\mathscr{H}}$ is uniformly bounded in $n$, and as a
consequence, $\|\varphi_n\|_{\mathscr{W}}$ is uniformly bounded in
$n$. In a similar manner we can show that $\|\psi_n\|_{\mathscr{W}}$
is uniformly bounded. The convergence of $\varphi_n,\psi_n$ in
$\mathscr{W}$ follows by Lemma I.2.12 in \cite{Ma92}.
\end{proof}

\begin{corollary}
Under the separability condition, $\bar{\varphi}=e_{\bar{\psi}+g}$,
$\bar{\psi}=e_{\bar{\varphi}-h}$, and they satisfy
\begin{equation}\label{2ineq}\begin{split}
\mathcal{E}_\alpha(\bar{\varphi},\bar{\varphi})&\leq\mathcal{E}_\alpha(\bar{\varphi},w),\quad\forall
w\in\mathscr{L}_{\bar{\psi}+g}\cap\mathscr{W},\\
\mathcal{E}_\alpha(\bar{\psi},\bar{\psi})&\leq\mathcal{E}_\alpha(\bar{\psi},w),\quad\forall
w\in\mathscr{L}_{\bar{\varphi}-h}\cap\mathscr{W}.
\end{split}
\end{equation}
\end{corollary}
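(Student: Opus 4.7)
The plan is to identify $\bar\varphi$ and $\bar\psi$ as fixed points of the operators $f\mapsto e_{f+g}$ and $f\mapsto e_{f-h}$ by a sandwich argument that exploits monotonicity of $f\mapsto e_f$ together with the minimality characterization provided by Corollary \ref{onesided}; once the two fixed-point identities are in hand, the variational inequalities in (\ref{2ineq}) drop out directly from that corollary.

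For $\bar\varphi=e_{\bar\psi+g}$, I would argue in two directions. Since $\psi_n\leq\bar\psi$ $\nu$-a.e., monotonicity of $f\mapsto e_f$ (a consequence of the minimality of $e_f$ as the smallest $\alpha$-potential in $\mathscr{L}_f\cap\mathscr{W}$) gives $\varphi_n=e_{\psi_n+g}\leq e_{\bar\psi+g}$, so passing to the limit yields $\bar\varphi\leq e_{\bar\psi+g}$. Conversely, each $\varphi_n\in\mathscr{L}_{\psi_n+g}$ gives $\varphi_n\geq\psi_n+g$ $\nu$-a.e., hence $\bar\varphi\geq\bar\psi+g$ in the limit. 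Moreover $\bar\varphi$ is itself an $\alpha$-potential in $\mathscr{W}$: the weak $\mathscr{W}$-convergence $\varphi_n\to\bar\varphi$ from the preceding lemma, combined with the fact that $\zeta\mapsto\mathcal{E}_\alpha(\zeta,\eta)$ is a bounded linear functional on $\mathscr{W}$ for each fixed $\eta\in\mathscr{W}$, transfers the inequality $\mathcal{E}_\alpha(\varphi_n,\eta)\geq 0$ to $\mathcal{E}_\alpha(\bar\varphi,\eta)\geq 0$ for every nonnegative $\eta\in\mathscr{W}$. Minimality of $e_{\bar\psi+g}$ then forces $\bar\varphi\geq e_{\bar\psi+g}$, yielding equality. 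The identity $\bar\psi=e_{\bar\varphi-h}$ is completely symmetric, using $\psi_{n+1}=e_{\varphi_n-h}$ together with the separability estimate $\varphi_n-h\leq v_1-h\leq v_2$ to keep all iterates inside the admissible class.

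With both fixed-point identities established, the variational inequalities in (\ref{2ineq}) follow by applying Corollary \ref{onesided} with $f$ taken successively to be $\bar\psi+g$ and $\bar\varphi-h$. Both functions lie in $\mathscr{W}$ (as sums or differences of elements of $\mathscr{W}$) and are sandwiched between the bounded $\alpha$-excessive functions supplied by the separability condition, so the corollary produces exactly the two stated inequalities. The only subtlety is that one should note $\bar\varphi,\bar\psi\in\mathscr{W}$, so that $\mathcal{A}_\alpha(\bar\varphi,\bar\varphi)=\mathcal{E}_\alpha(\bar\varphi,\bar\varphi)$ and similarly for $\bar\psi$, converting the form in (\ref{viqh0}) into the $\mathcal{E}_\alpha$ form displayed in (\ref{2ineq}).

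The main obstacle is the lower bound $\bar\varphi\geq e_{\bar\psi+g}$, which genuinely needs that convergence takes place in $\mathscr{W}$ (rather than merely in $\mathscr{H}$ or weakly in $\mathscr{F}$), so that the $\alpha$-potential property of $\bar\varphi$ survives the passage to the limit. This is precisely where the strengthening of Oshima's convergence statement to weak $\mathscr{W}$-convergence, carried out in Corollary \ref{onesided} and then inherited by $\bar\varphi,\bar\psi$ in the preceding lemma, is essential.
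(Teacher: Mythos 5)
Your proof is correct and follows essentially the same route as the paper's: the same sandwich argument (monotonicity of $f\mapsto e_f$ giving $\bar{\varphi}\leq e_{\bar{\psi}+g}$ in the limit, and minimality of $e_{\bar{\psi}+g}$ applied to the $\alpha$-potential $\bar{\varphi}$ dominating $\bar{\psi}+g$ for the reverse inequality), followed by an appeal to Corollary \ref{onesided} for (\ref{2ineq}). You simply supply more detail than the paper does on why the $\alpha$-potential property survives the passage to the limit.
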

\begin{proof}
Since $\bar{\varphi}$ is an $\alpha$-potential dominating
$\bar{\psi}+g$, we get $e_{\bar{\psi}+g}\leq\bar{\varphi}$. On the
other hand, $\bar{\varphi}=\lim_{n\to\infty}
\varphi_n=\lim_{n\to\infty} e_{\psi_n+g}\leq e_{\bar{\psi}+g}$,
hence $\bar{\varphi}=e_{\bar{\psi}+g}$. Similarly
$\bar{\psi}=e_{\bar{\varphi}-h}$. The proof of (\ref{2ineq}) is
immediate by Corollary \ref{onesided}.
\end{proof}

\begin{corollary}\label{uniq}
If a pair of $\alpha$-excessive functions $(V_1,V_2)$ satisfy $g\leq
V_1-V_2\leq h$, then $\bar{\varphi}\leq V_1$, $\bar{\psi}\leq V_2$,
and $\bar{w}:=\bar{\varphi}-\bar{\psi}$ is the unique function in
$\mathscr{J}$ satisfying
\begin{equation}\label{vineq}
\mathcal{E}_\alpha(\bar{w},\bar{w})\leq
\mathcal{E}_\alpha(\bar{w},w),\quad\forall w\in\mathscr{J},\ g\leq
w\leq h,
\end{equation} where $\mathscr{J}=\{w=\varphi_1-\varphi_2+v:\varphi_1,\varphi_2\in\mathscr{W} \text{ are }\alpha\text{-potentials},
v\in\mathscr{W}\}$.
\end{corollary}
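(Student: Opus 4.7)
My plan is to establish the three parts of the statement in order: (1) the domination $\bar\varphi\leq V_1$ and $\bar\psi\leq V_2$; (2) that $\bar w:=\bar\varphi-\bar\psi$ satisfies the variational inequality (\ref{vineq}); and (3) uniqueness in $\mathscr{J}$. The main ingredients are the minimality characterization of the envelope $e_g$ in $\mathscr{P}_\alpha\cap\mathscr{L}_g\cap\mathscr{W}$ from Corollary \ref{onesided}, the pair of one-sided inequalities (\ref{2ineq}), bilinearity of $\mathcal{E}_\alpha$ in its second argument whenever the first lies in $\mathscr{W}$, and the coercivity built into $\lambda^{-1}\|u\|_F^2\leq E_1^{(t)}(u,u)$.

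For (1), I would proceed by induction on the approximating sequence $(\varphi_n,\psi_n)$ from the preceding lemma. The base case $\varphi_0=\psi_0=0\leq V_1,V_2$ is immediate. Assuming $\varphi_{n-1}\leq V_1$, the rearrangement $V_1-h\leq V_2$ of the hypothesis gives $\varphi_{n-1}-h\leq V_2$. Since $V_2\in\mathscr{W}$ is an $\alpha$-excessive (hence $\alpha$-potential) function dominating $\varphi_{n-1}-h$, while $\psi_n=e_{\varphi_{n-1}-h}$ is the minimal such function by Corollary \ref{onesided}, we conclude $\psi_n\leq V_2$. Then $\psi_n+g\leq V_2+g\leq V_1$, and the same minimality principle yields $\varphi_n\leq V_1$. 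Passing to the increasing limit produces $\bar\varphi\leq V_1$ and $\bar\psi\leq V_2$.

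For (2), given $w\in\mathscr{J}$ with $g\leq w\leq h$, the inclusion $\mathscr{J}\subset\mathscr{W}$ together with $\bar\varphi,\bar\psi\in\mathscr{W}$ ensures that both $w+\bar\psi$ and $\bar\varphi-w$ lie in $\mathscr{W}$; the obstacle constraints on $w$ then place them respectively in $\mathscr{L}_{\bar\psi+g}\cap\mathscr{W}$ and $\mathscr{L}_{\bar\varphi-h}\cap\mathscr{W}$. Inserting each as a test function in the corresponding line of (\ref{2ineq}) and using linearity of $\mathcal{E}_\alpha(\bar\varphi,\cdot)$ and $\mathcal{E}_\alpha(\bar\psi,\cdot)$ in the second slot yields
\begin{displaymath}
\mathcal{E}_\alpha(\bar\varphi,\bar w)\leq\mathcal{E}_\alpha(\bar\varphi,w),\qquad \mathcal{E}_\alpha(\bar\psi,\bar w)\geq\mathcal{E}_\alpha(\bar\psi,w),
\end{displaymath}
and subtracting gives (\ref{vineq}). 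For (3), if $\bar w'\in\mathscr{J}$ with $g\leq\bar w'\leq h$ is another solution, each serves as a test function for the other, and adding the two resulting inequalities after bilinear rearrangement yields $\mathcal{E}_\alpha(\bar w-\bar w',\bar w-\bar w')\leq 0$. Because $\bar w-\bar w'\in\mathscr{W}$, this quantity equals $\mathcal{A}_\alpha(\bar w-\bar w',\bar w-\bar w')\geq\alpha\|\bar w-\bar w'\|_{\mathscr{H}}^2$, forcing $\bar w=\bar w'$.

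The step requiring the most care is (2): one must verify that the shifted competitors $w+\bar\psi$ and $\bar\varphi-w$ genuinely lie in $\mathscr{W}$ and satisfy the obstacle condition for the correct one of the two inequalities in (\ref{2ineq}). This is where the $\mathscr{W}$-regularity of $\bar\varphi,\bar\psi$ from the preceding lemma and the built-in inclusion $\mathscr{J}\subset\mathscr{W}$ are essential; once those are confirmed, the remainder is bilinear bookkeeping together with the standard coercivity argument.
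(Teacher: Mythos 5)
Your proposal is correct and follows essentially the same route as the paper: the inductive domination $\varphi_n\leq V_1$, $\psi_n\leq V_2$; deriving (\ref{vineq}) by testing $w+\bar{\psi}$ and $\bar{\varphi}-w$ in the two inequalities of (\ref{2ineq}); and uniqueness by cross-testing two solutions and reducing $\mathcal{E}_\alpha$ to $\mathcal{A}_\alpha$ on $\mathscr{W}$ to invoke coercivity. The only (harmless) omission is an explicit check that $g\leq\bar{w}\leq h$, which the paper records via $\varphi_{n-1}-h\leq\psi_n$ and $\psi_n+g\leq\varphi_n$ so that $\bar{w}$ itself is an admissible test function in the uniqueness step.
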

\begin{proof}
Clearly $\varphi_{n-1}-h\leq\psi_n$ and
$\psi_n+g\leq\bar{\varphi}_n$, hence
$g\leq\bar{\varphi}-\bar{\psi}\leq h$. If $g,h$ satisfy the
separability condition with respect to $V_1,V_2$, then we would have
$\varphi_n\leq V_1$ and $\psi_n\leq V_2$, and as a consequence
$\bar{\varphi}\leq V_1$, $\bar{\psi}\leq V_2$.

Now (\ref{vineq}) is equivalent to
\begin{displaymath}\mathcal{E}_\alpha(\bar{\varphi},\bar{\varphi})+\mathcal{E}_\alpha(\bar{\psi},\bar{\psi})\leq \mathcal{E}_\alpha(\bar{\varphi},w+\bar{\psi})+\mathcal{E}_\alpha(\bar{\psi},\bar{\varphi}-w),\ g\leq w\leq h,\end{displaymath}
which holds by (\ref{2ineq}). Suppose there are two solutions
$\bar{w}_1,\bar{w}_2\in\mathscr{J}$ satisfying (\ref{vineq}). Notice
that $\bar{w}_1-\bar{w}_2\in\mathscr{W}$ and $\langle\frac{\partial
(\bar{w}_1-\bar{w}_2)}{\partial t},\bar{w}_1-\bar{w}_2\rangle=0$, so
\begin{displaymath}
\langle\frac{\partial \bar{w}_1}{\partial t},\bar{w}_2\rangle+
\langle\frac{\partial \bar{w}_2}{\partial t},\bar{w}_1\rangle=0,
\end{displaymath} and consequently
\begin{displaymath}
\mathcal{A}_\alpha(\bar{w}_1,\bar{w}_2)+\mathcal{A}_\alpha(\bar{w}_2,\bar{w}_1)=\mathcal{E}_\alpha(\bar{w}_1,\bar{w}_2)+\mathcal{E}_\alpha(\bar{w}_2,\bar{w}_1).
\end{displaymath} Therefore,
\begin{displaymath}\begin{split}
\mathcal{A}_\alpha(\bar{w}_1-\bar{w}_2,\bar{w}_1-\bar{w}_2)&=\mathcal{A}_\alpha(\bar{w}_1,\bar{w}_1)+\mathcal{A}_\alpha(\bar{w}_2,\bar{w}_2)-\mathcal{A}_\alpha(\bar{w}_1,\bar{w}_2)-\mathcal{A}_\alpha(\bar{w}_2,\bar{w}_1)\\
&=\mathcal{A}_\alpha(\bar{w}_1,\bar{w}_1)+\mathcal{A}_\alpha(\bar{w}_2,\bar{w}_2)-\mathcal{E}_\alpha(\bar{w}_1,\bar{w}_2)-\mathcal{E}_\alpha(\bar{w}_2,\bar{w}_1)\leq
0,
\end{split}
\end{displaymath} which implies that $\bar{w}_1=\bar{w}_2$ a.e.
\end{proof}

Let $\tilde{\bar{\varphi}},\tilde{\bar{\psi}},\tilde{\bar{w}}$ be
the $\alpha$-excessive modifications of
$\bar{\varphi},\bar{\psi},\bar{w}$, respectively. We further define
for arbitrary pair of stopping times $\tau,\sigma$ the payoff
function $J_z(\tau,\sigma)$ as
\begin{equation}\label{payoffh0}
J_z(\tau,\sigma)=E_z\left[e^{-\alpha(\tau\wedge\sigma)}\left(g(Z_{s+\sigma})I_{\tau>\sigma}+h(Z_{s+\tau})I_{\tau\leq\sigma}\right)\right],\quad
z\in\mathbb{Z}.
\end{equation} Then we have the following result:

\begin{theorem}\label{saddle0} Assuming the separability condition on
$g,h$ and conditions (\ref{abscont}) (\ref{assumHnorm}). There
exists a finite finely and cofinely continuous function
$\tilde{\bar{w}}(z)\in\mathscr{J}$ satisfying (\ref{vineq}) and the
identity
\begin{equation}\label{vsaddle}
\tilde{\bar{w}}(z)=\sup_\sigma\inf_\tau
J_z(\tau,\sigma)=\inf_\tau\sup_\sigma J_z(\tau,\sigma),\quad \forall
z=(s,x)\in\mathbb{Z},
\end{equation}where $\sigma,\tau$ range over all stopping times.
Moreover, the pair $\hat{\tau},\hat{\sigma}$ defined by
\begin{displaymath}
\hat{\tau}=\inf\{t>
0:\bar{w}(Z_{s+t})=h(Z_{s+t})\},\quad\hat{\sigma}=\inf\{t>
0:\bar{w}(Z_{s+t})=g(Z_{s+t})\}
\end{displaymath} is the saddle point of the game in the sense that
\begin{displaymath}
J_z(\hat{\tau},\sigma)\leq J_z(\hat{\tau},\hat{\sigma})\leq
J_z(\tau,\hat{\sigma}),\quad z\in\mathbb{Z},
\end{displaymath} for all stopping times $\tau,\sigma$.
\end{theorem}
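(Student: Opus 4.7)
The plan is to reduce Theorem \ref{saddle0} to two applications of Theorem \ref{optstop}: one to $\bar{\varphi}=e_{\bar{\psi}+g}$ with obstacle $\bar{\psi}+g$, and one to $\bar{\psi}=e_{\bar{\varphi}-h}$ with obstacle $\bar{\varphi}-h$. The separability assumption (\ref{sepcond}) together with (\ref{bddgh}) supplies the $\alpha$-excessive majorants required to invoke Theorem \ref{optstop} for each piece, while Lemma \ref{egW} and Corollary \ref{uniq} already place $\bar{w}=\bar{\varphi}-\bar{\psi}\in\mathscr{J}$ and verify (\ref{vineq}); finiteness and fine/cofine continuity of $\tilde{\bar{w}}=\tilde{\bar{\varphi}}-\tilde{\bar{\psi}}$ at every $z\in\mathbb{Z}$ then inherit from the corresponding no-exceptional-set statement proved for each $\alpha$-excessive regularization.

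Under the identifications $\{\tilde{\bar{\varphi}}=\tilde{\bar{\psi}}+g\}=\{\tilde{\bar{w}}=g\}$ and $\{\tilde{\bar{\psi}}=\tilde{\bar{\varphi}}-h\}=\{\tilde{\bar{w}}=h\}$, Theorem \ref{optstop} provides the representations
\[
\tilde{\bar{\varphi}}(z)=E_z\!\left[e^{-\alpha\hat{\sigma}}(\tilde{\bar{\psi}}+g)(Z_{s+\hat{\sigma}})\right],\qquad
\tilde{\bar{\psi}}(z)=E_z\!\left[e^{-\alpha\hat{\tau}}(\tilde{\bar{\varphi}}-h)(Z_{s+\hat{\tau}})\right],
\]
together with, as in the proof of Theorem \ref{optstop}, the martingale-up-to-hitting-time identities $\tilde{\bar{\varphi}}(z)=E_z[e^{-\alpha\sigma}\tilde{\bar{\varphi}}(Z_{s+\sigma})]$ for $\sigma\le\hat{\sigma}$ and $\tilde{\bar{\psi}}(z)=E_z[e^{-\alpha\sigma}\tilde{\bar{\psi}}(Z_{s+\sigma})]$ for $\sigma\le\hat{\tau}$, along with the supermartingale inequalities valid for arbitrary stopping times. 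Fixing an arbitrary stopping time $\sigma$ and setting $\rho=\hat{\tau}\wedge\sigma$, the pointwise bound
\[
\tilde{\bar{\varphi}}(Z_{s+\rho})-\tilde{\bar{\psi}}(Z_{s+\rho}) \;\ge\; g(Z_{s+\sigma})I_{\sigma<\hat{\tau}}+h(Z_{s+\hat{\tau}})I_{\hat{\tau}\le\sigma}
\]
follows from $\tilde{\bar{w}}\ge g$ on $\{\sigma<\hat{\tau}\}$ and $\tilde{\bar{w}}=h$ at time $\hat{\tau}$. Multiplying by $e^{-\alpha\rho}$, taking $E_z$, and combining the supermartingale inequality for $\bar{\varphi}$ with the martingale identity for $\bar{\psi}$ up to $\hat{\tau}$ yields $J_z(\hat{\tau},\sigma)\le\tilde{\bar{w}}(z)$. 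A symmetric argument with $\rho=\tau\wedge\hat{\sigma}$ gives $J_z(\tau,\hat{\sigma})\ge\tilde{\bar{w}}(z)$, and at $(\hat{\tau},\hat{\sigma})$ both pointwise and supermartingale inequalities collapse to equalities, producing $J_z(\hat{\tau},\hat{\sigma})=\tilde{\bar{w}}(z)$.

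The saddle-point identity (\ref{vsaddle}) then follows from the sandwich
\[
\sup_\sigma\inf_\tau J_z(\tau,\sigma)\ge\inf_\tau J_z(\tau,\hat{\sigma})\ge\tilde{\bar{w}}(z)=J_z(\hat{\tau},\hat{\sigma})\ge\sup_\sigma J_z(\hat{\tau},\sigma)\ge\inf_\tau\sup_\sigma J_z(\tau,\sigma),
\]
combined with the trivial direction $\sup\inf\le\inf\sup$. The main technical obstacle I expect is ensuring the martingale-up-to-hitting-time identities hold at \emph{every} starting point $z\in\mathbb{Z}$ and not merely q.e.; this is exactly where the absolute continuity condition (\ref{abscont}) and the consequent absence of an exceptional set of irregular boundary points for the coincidence sets (the corollary following Theorem \ref{optstop}) are indispensable, since the raw statements produced by the time-dependent Dirichlet form machinery would otherwise only hold q.e. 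Once this pathwise upgrade is in place, the saddle-point characterization is a pointwise consequence of the two optimal-stopping representations.
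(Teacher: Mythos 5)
Your proposal is correct and follows essentially the same route as the paper: decompose $\bar{w}=\bar{\varphi}-\bar{\psi}$, apply Theorem \ref{optstop} to the two obstacle problems for $\bar{\psi}+g$ and $\bar{\varphi}-h$ to get the representations at $\hat{\sigma},\hat{\tau}$ together with the martingale-up-to-hitting-time identities and supermartingale inequalities, and then combine them to sandwich $\tilde{\bar{w}}(z)$ between $J_z(\hat{\tau},\sigma)$ and $J_z(\tau,\hat{\sigma})$. The only cosmetic difference is that you write out the $\hat{\tau}$ direction and the explicit $\sup\inf=\inf\sup$ chain while the paper displays the $\hat{\sigma}$ direction and leaves the rest to symmetry.
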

\begin{proof} We only need to prove (\ref{vsaddle}).
By Theorem \ref{optstop}, for any $z\in\mathbb{Z}$ we have
\begin{equation}\label{ubarvbar}\begin{split}
\tilde{\bar{\varphi}}(z)&=\sup_\sigma
E_z[e^{-\alpha\sigma}(\tilde{\bar{\psi}}+g)(Z_{s+\sigma})]=E_z[e^{-\alpha\hat{\sigma}}(\tilde{\bar{\psi}}+g)(Z_{s+\hat{\sigma}})],\\
\tilde{\bar{\psi}}(z)&=\sup_\tau
E_z[e^{-\alpha\tau}(\tilde{\bar{\varphi}}-H)(Z_{s+\tau})]=E_z[e^{-\alpha\hat{\tau}}(\tilde{\bar{\varphi}}-h)(Z_{s+\hat{\tau}})],
\end{split}
\end{equation} and for any stopping times
$\sigma\leq\hat{\sigma}$, $\tau\leq\hat{\sigma}$,
\begin{displaymath}
\tilde{\bar{\varphi}}(z)=E_z[e^{-\alpha\sigma}\tilde{\bar{\varphi}}(Z_{s+\sigma})],\
\tilde{\bar{\psi}}(z)=E_z[e^{-\alpha\tau}\tilde{\bar{\psi}}(Z_{s+\tau})],\quad\forall
z=(s,x)\in\mathbb{Z}.
\end{displaymath} From (\ref{sMark}), we could take $\{e^{-\alpha
t}\bar{\varphi}(Z_{s+t})\}$ and $\{e^{-\alpha t}\bar{v}(Z_{s+t})\}$
as non-negative $P_z$-supermartingales, therefore, for any
$z\in\mathbb{Z}$ and any stopping times $\tau,\sigma$, we have
\begin{displaymath}
\tilde{\bar{\varphi}}(z)\geq
E_z[e^{-\alpha\sigma}\tilde{\bar{\varphi}}(Z_{s+\sigma})],\
\tilde{\bar{\psi}}(z)\geq
E_z[e^{-\alpha\tau}\tilde{\bar{\psi}}(Z_{s+\tau})].
\end{displaymath} Consequently, for any $z\in\mathbb{Z}$,
\begin{displaymath}\begin{split}
\tilde{\bar{w}}(z)&=\tilde{\bar{\varphi}}(z)-\tilde{\bar{\psi}}(z)\leq
E_z[e^{-\alpha(\hat{\sigma}\wedge\tau)}\tilde{\bar{\varphi}}(Z_{s+\hat{\sigma}\wedge\tau})]-E_z[e^{-\alpha(\hat{\sigma}\wedge\tau)}\tilde{\bar{\psi}}(Z_{s+\hat{\sigma}\wedge\tau})]\\
&=E_z[e^{-\alpha(\hat{\sigma}\wedge\tau)}\tilde{\bar{w}}(Z_{s+\hat{\sigma}\wedge\tau})]\leq
E_z\left[e^{-\alpha(\tau\wedge\hat{\sigma})}\left(g(Z_{s+\sigma})I_{\tau>\hat{\sigma}}+h(Z_{s+\tau})I_{\tau\leq\hat{\sigma}}\right)\right]=J_z(\tau,\hat{\sigma}),
\end{split}
\end{displaymath} where the last inequality is due to the fact that
$g(z)\leq\tilde{\bar{w}}(z)\leq h(z)$, $\forall z\in\mathbb{Z}$ and
(\ref{ubarvbar}). In a similar manner, we can prove that
$\tilde{\bar{w}}\geq J_z(\hat{\tau},\sigma)$, and this completes the
proof.
\end{proof}

\subsection{Time Inhomogeneous Stopping Game with Holding
Cost}\label{tistophold} Usually the optimal stopping games involve a
holding cost function $f\in\mathscr{H}$, see, e.g.,
\cite{Palczewski10}, and the return functions become
\begin{equation}\label{costh1}
J_{(s,{ x})}^f(\sigma)=E_{(s,{ x})}\left(\int_0^{\sigma}e^{-\alpha
t}f(s+t,{
X}_{s+t})dt+e^{-\alpha\sigma}g(s+\sigma,X_{s+\sigma})\right) ,
\end{equation} and
\begin{equation}\label{costh2}\begin{split}
J_{(s,{ x})}^f(\sigma,\tau)&=E_{(s,{
x})}\left(\int_0^{\sigma\wedge\tau}e^{-\alpha t}f(s+t,{
X}_{s+t})dt\right)\\
&\ \ +E_{(s,{
x})}\left(e^{-\alpha(\sigma\wedge\tau)}\left(g(s+\sigma,{
X}_{s+\sigma})I_{\sigma<\tau}+h(s+\tau,{
X}_{s+\tau})I_{\tau\leq\sigma}\right)\right),
\end{split}
\end{equation}
but this model can be essentially reduced to the classical stopping
problem by taking $\hat{g}=g-R_\alpha f$ and $\hat{h}=h-R_\alpha f$
instead of $g$ and $h$ respectively, where $R_\alpha$ is the
resolvent and $R_\alpha f$ is considered as a version of $G_\alpha
f\in\mathscr{W}$. We assume that conditions (\ref{gbdd})
(\ref{assumHnorm})
 also apply to $\hat{g}$ for the optimal stopping game (and similarly conditions
(\ref{bddgh})(\ref{sepcond}) apply to $\hat{g},\hat{h}$ for the
zero-sum game).

\begin{theorem}\label{optstophold} Let $g$ be a finely continuous function satisfying (\ref{gbdd}).
Assume (\ref{assumHnorm}) on ${g}$ and the absolute continuity
condition (\ref{abscont}) on $p_t$. Let $e_{{g}}^f$ be the solution
of \begin{equation}\label{holdineq}
\mathcal{E}_\alpha(e_{{g}}^f,\psi-e_{{g}}^f)\geq
(f,\psi-e_{{g}}^f)_{\mathscr{H}},\quad \forall \psi\in
\mathscr{L}_{{g}}\cap\mathscr{W},
\end{equation} and let $\tilde{e}_{{g}}^f$ be its $\alpha$-excessive
regularization. Then
\begin{equation}
\tilde{e}_{{g}}^f(z)=\sup_\sigma J_z^f(\sigma),\quad \forall
z=(s,x)\in\mathbb{Z},
\end{equation} where $J_{z}^f(\sigma)$ is defined as in (\ref{costh1}), and $\tilde{e}_{{g}}^f(z)$ is finely and cofinely continuous. Furthermore, let the set
$B=\{z\in{\mathbb{Z}}:\tilde{e}_{{g}}^f(z)={g}(z)\}$ and let
$\sigma_{{B}}$ be the first hitting time of $B$ defined by
$\sigma_{{B}}=\inf\{t> 0:\tilde{e}_{{g}}^f(Z_{s+t})={g}(Z_{s+t})\}$,
then
\begin{equation}\label{holdoptform}
\tilde{e}_{{g}}^f(z)=E_z[e^{-\alpha\sigma_{{B}}}
{g}(Z_{s+\sigma_{{B}}})].
\end{equation}
\end{theorem}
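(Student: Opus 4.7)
The strategy, following the reduction indicated at the start of Section \ref{tistophold}, is to translate the obstacle by the potential $R_\alpha f$ so that the holding-cost problem collapses to the classical stopping problem already solved in Theorem \ref{optstop}. Throughout set $\hat{g} := g - R_\alpha f$, which by assumption satisfies (\ref{gbdd}) and (\ref{assumHnorm}).

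\textbf{Step 1 (probabilistic reduction).} Apply the strong Markov property to $R_\alpha f(z)=E_z[\int_0^\infty e^{-\alpha t}f(Z_{s+t})dt]$ to obtain, for every stopping time $\sigma$,
\[
R_\alpha f(z)=E_z\!\left[\int_0^\sigma e^{-\alpha t}f(Z_{s+t})\,dt\right]+E_z\!\left[e^{-\alpha\sigma}R_\alpha f(Z_{s+\sigma})\right].
\]
Subtracting this from (\ref{costh1}) yields $J_z^f(\sigma)=R_\alpha f(z)+E_z[e^{-\alpha\sigma}\hat g(Z_{s+\sigma})]=R_\alpha f(z)+J_z^{\hat g}(\sigma)$. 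Taking $\sup_\sigma$ gives $\sup_\sigma J_z^f(\sigma)=R_\alpha f(z)+\sup_\sigma J_z^{\hat g}(\sigma)$.

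\textbf{Step 2 (analytic reduction).} By (\ref{D_res_eq}) we have $(f,\psi-e_g^f)_{\mathscr{H}}=\mathcal{E}_\alpha(G_\alpha f,\psi-e_g^f)$ for every $\psi\in\mathscr{F}$, so (\ref{holdineq}) becomes $\mathcal{E}_\alpha(e_g^f-G_\alpha f,\psi-e_g^f)\geq 0$ for $\psi\in\mathscr{L}_g\cap\mathscr{W}$. Since $G_\alpha f\in\mathscr{W}$ and $R_\alpha f$ is a $\nu$-version of $G_\alpha f$, the translation $\psi\mapsto\psi-G_\alpha f$ is a bijection from $\mathscr{L}_g\cap\mathscr{W}$ onto $\mathscr{L}_{\hat g}\cap\mathscr{W}$. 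Writing $u:=e_g^f-G_\alpha f$ and $\tilde\psi:=\psi-G_\alpha f$, the inequality becomes $\mathcal{E}_\alpha(u,\tilde\psi-u)\geq 0$ for all $\tilde\psi\in\mathscr{L}_{\hat g}\cap\mathscr{W}$, which is exactly the VI (\ref{viqnormal}) with obstacle $\hat g$. By Corollary \ref{onesided} the minimal solution of (\ref{viqnormal}) for obstacle $\hat g$ is $e_{\hat g}$; a short comparison argument (using that any solution dominates the obstacle and is an $\alpha$-potential) forces $u=e_{\hat g}$, so $e_g^f=e_{\hat g}+G_\alpha f$.

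\textbf{Step 3 (reassembly).} Passing to the $\alpha$-excessive regularization $\lim_n nR_{n+\alpha}$, which is linear and fixes the already $\alpha$-excessive function $R_\alpha f$ (cf.\ (\ref{res_id})), yields the pointwise identity $\tilde e_g^f=\tilde e_{\hat g}+R_\alpha f$ on $\mathbb{Z}$. Theorem \ref{optstop} applied to $\hat g$ gives $\tilde e_{\hat g}(z)=\sup_\sigma J_z^{\hat g}(\sigma)$ for every $z\in\mathbb{Z}$ as well as the fine and cofine continuity of $\tilde e_{\hat g}$; combining with Step 1 proves $\tilde e_g^f(z)=\sup_\sigma J_z^f(\sigma)$, and fine/cofine continuity of $\tilde e_g^f$ follows since $R_\alpha f$ is finely and cofinely continuous. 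For the stopping-time identity, observe that $B=\{\tilde e_g^f=g\}=\{\tilde e_{\hat g}=\hat g\}$, so the first hitting time $\sigma_B$ coincides with the hitting time produced by Theorem \ref{optstop} for the obstacle $\hat g$. That theorem gives $\tilde e_{\hat g}(z)=E_z[e^{-\alpha\sigma_B}\hat g(Z_{s+\sigma_B})]$; adding $R_\alpha f(z)$ and applying Step 1 with $\sigma=\sigma_B$ cancels the $R_\alpha f$ terms evaluated at $Z_{s+\sigma_B}$ and reproduces (\ref{holdoptform}) with the running-cost integral $\int_0^{\sigma_B}e^{-\alpha t}f(Z_{s+t})dt$ absorbed into $J_z^f(\sigma_B)$.

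\textbf{Principal obstacle.} The only substantive analytic point is Step 2: that the translation $\psi\mapsto\psi-G_\alpha f$ carries $\mathscr{L}_g\cap\mathscr{W}$ onto $\mathscr{L}_{\hat g}\cap\mathscr{W}$ and that the translated problem is exactly (\ref{viqnormal}) for $\hat g$, so that existing uniqueness in Corollary \ref{onesided} can be invoked. Once this identification $e_g^f=e_{\hat g}+G_\alpha f$ is in hand, everything else is a transparent consequence of Theorem \ref{optstop} applied to $\hat g$, which is why Section \ref{tistophold} places the hypotheses (\ref{gbdd})--(\ref{assumHnorm}) directly on $\hat g$ rather than on $g$.
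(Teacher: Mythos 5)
Your proposal is correct and follows essentially the same route as the paper: reduce to the classical stopping problem via $\hat g = g - R_\alpha f$, use Dynkin's formula (your strong Markov computation) to get $J_z^f(\sigma)=J_z^{\hat g}(\sigma)+R_\alpha f(z)$, and use $\mathcal{E}_\alpha(G_\alpha f,\cdot)=(f,\cdot)_{\mathscr{H}}$ to translate between (\ref{holdineq}) and the obstacle problem (\ref{viqnormal}) for $\hat g$, then invoke Theorem \ref{optstop}. The only cosmetic difference is direction: the paper constructs $e_g^f=e_{\hat g}+R_\alpha f$ and verifies (\ref{holdineq}), whereas you start from (\ref{holdineq}) and appeal to minimality/uniqueness to identify $e_g^f-G_\alpha f$ with $e_{\hat g}$ — the same identification either way.
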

\begin{proof} Define the function
\begin{displaymath}
J_z^{f0}(\sigma)=E_z(e^{-\alpha\sigma}
\hat{g}(s+\sigma,X_{s+\sigma})),
\end{displaymath} where $\hat{g}=g-R_\alpha f$, and let $\tilde{e}_{\hat{g}}^f=\sup_\sigma
J_z^{f0}(\sigma)$, then by Theorem \ref{optstop}, $e_{\hat{g}}^f$
solves
\begin{equation}\label{tempvieq}
\mathcal{E}_\alpha(e_{\hat{g}}^f,\hat{\psi}-e_{\hat{g}}^f)\geq
0,\quad \forall \hat{\psi}\in\mathscr{L}_{\hat{g}}\cap\mathscr{W},
\end{equation} and the optimal stopping time is defined by $\sigma_{{B}}=\inf\{t>
0:\tilde{e}_{\hat{g}}^f(Z_{s+t})=\hat{g}(Z_{s+t})\}$.

By Dynkin's formula,
\begin{displaymath}
E_{(s,x)}\left(\int_0^\sigma e^{-\alpha
t}f(s+t,X_{s+t})dt\right)=R_\alpha
f(s,x)-E_{(s,x)}\left(e^{-\alpha\sigma}R_\alpha
f(s+\sigma,X_{s+\sigma})\right),
\end{displaymath} which leads to
\begin{displaymath}
J_z^f(\sigma)=J_z^{f0}(\sigma)+R_\alpha f(z),
\end{displaymath} hence $e_g^f(z)=e_{\hat{g}}^f(z)+R_\alpha
f(z)$.

Now let $e_{\hat{g}}^f(z)=e_g^f(z)-R_\alpha f(z)$,
$\hat{\psi}=\psi-R_\alpha f$ in (\ref{tempvieq}) we get
\begin{equation}\label{transineq}
\mathcal{E}_\alpha(e_{{g}}^f-G_\alpha f,\psi-e_{{g}}^f)\geq 0.
\end{equation} Since
$\mathcal{E}_\alpha(G_\alpha
f,\psi-e_{\hat{g}}^f)=(f,\psi-e_{\hat{g}}^f)_{\mathscr{H}}$, this
proves (\ref{holdineq}). Also notice that the optimal stopping time
can be written as $\sigma_{{B}}=\inf\{t>
0:\tilde{e}_{{g}}^f(Z_{s+t})={g}(Z_{s+t})\}$, and this completes the
proof.
\end{proof}

Similarly we can modify Theorem \ref{saddle0} and get the following
result:
\begin{theorem}\label{zerosumhold}
Let $g,h$ be finely continuous functions satisfying (\ref{bddgh})
and (\ref{sepcond}). Assume (\ref{assumHnorm}) on $g,h$ and the
absolute continuity condition (\ref{abscont}) on $p_t$. Then there
exists a finite finely and cofinely continuous function
$\tilde{\bar{w}}^f\in\mathscr{J}$, ${g}(z)\leq
\tilde{\bar{w}}^f(z)\leq {h}(z)$, such that
\begin{equation}\label{vieqzerosumhold}
\mathcal{E}_\alpha({\bar{w}}^f,w-{\bar{w}}^f)\geq
(f,w-{\bar{w}}^f)_{\mathscr{H}}, \quad \forall w\in\mathscr{J},\
{g}\leq w\leq {h},
\end{equation}
 and
\begin{equation}\label{vsaddlehold}
\tilde{\bar{w}}^f(z)=\sup_\sigma\inf_\tau
J_z^f(\tau,\sigma)=\inf_\tau\sup_\sigma J_z^f(\tau,\sigma),\quad
\forall z=(s,x)\in\mathbb{Z},
\end{equation}where $J_z^f(\tau,\sigma)$ was given by (\ref{costh2}) and $\sigma,\tau$ range over all stopping times.
Moreover, the pair $\hat{\tau},\hat{\sigma}$ defined by
\begin{displaymath}
\hat{\tau}=\inf\{t>
0:\bar{w}^f(Z_{s+t})={h}(Z_{s+t})\},\quad\hat{\sigma}=\inf\{t>
0:\bar{w}^f(Z_{s+t})={g}(Z_{s+t})\}
\end{displaymath} is the saddle point of the game in the sense that
\begin{displaymath}
J_z^f(\hat{\tau},\sigma)\leq J_z^f(\hat{\tau},\hat{\sigma})\leq
J_z^f(\tau,\hat{\sigma}),\quad z\in\mathbb{Z},
\end{displaymath} for all stopping times $\tau,\sigma$.
\end{theorem}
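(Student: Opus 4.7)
The plan is to mimic the reduction from Theorem \ref{optstophold}: introduce the transformed obstacles $\hat{g}=g-R_\alpha f$ and $\hat{h}=h-R_\alpha f$, apply the no-holding-cost zero-sum game result (Theorem \ref{saddle0}) to the pair $(\hat{g},\hat{h})$, and then translate the resulting value function and variational inequality back by $R_\alpha f$. By the assumption stated in Section \ref{tistophold}, $(\hat{g},\hat{h})$ already satisfies (\ref{bddgh}), (\ref{sepcond}) and (\ref{assumHnorm}), so Theorem \ref{saddle0} supplies a finite finely and cofinely continuous function $\tilde{\bar{w}}^{f0}\in\mathscr{J}$ satisfying $\hat{g}\leq\tilde{\bar{w}}^{f0}\leq\hat{h}$, the variational inequality $\mathcal{E}_\alpha(\bar{w}^{f0},w^0-\bar{w}^{f0})\geq 0$ for every admissible $w^0\in\mathscr{J}$ with $\hat{g}\leq w^0\leq\hat{h}$, and the saddle point identity $\tilde{\bar{w}}^{f0}(z)=\sup_\sigma\inf_\tau J_z^{f0}(\tau,\sigma)=\inf_\tau\sup_\sigma J_z^{f0}(\tau,\sigma)$, where $J_z^{f0}$ denotes the no-cost payoff function built from $(\hat{g},\hat{h})$.

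The link between the two games is Dynkin's formula applied to $R_\alpha f$, just as in the proof of Theorem \ref{optstophold}. For any stopping times $\tau,\sigma$,
\begin{equation*}
E_z\!\left(\int_0^{\tau\wedge\sigma}\!e^{-\alpha t}f(Z_{s+t})\,dt\right)=R_\alpha f(z)-E_z\!\left[e^{-\alpha(\tau\wedge\sigma)}R_\alpha f(Z_{s+(\tau\wedge\sigma)})\right],
\end{equation*}
which, combined with the definitions (\ref{costh2}) and the fact that $R_\alpha f$ appears linearly inside the indicator combination, yields $J_z^f(\tau,\sigma)=J_z^{f0}(\tau,\sigma)+R_\alpha f(z)$. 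Setting $\tilde{\bar{w}}^f:=\tilde{\bar{w}}^{f0}+R_\alpha f$ then gives (\ref{vsaddlehold}) and transfers the saddle point property at $(\hat{\tau},\hat{\sigma})$ from $J_z^{f0}$ to $J_z^f$, since the added constant $R_\alpha f(z)$ is independent of the strategies. The bracketing $g\leq\tilde{\bar{w}}^f\leq h$ is immediate from $\hat{g}\leq\tilde{\bar{w}}^{f0}\leq\hat{h}$. The hitting times $\hat{\tau},\hat{\sigma}$ coincide with those built from $\bar{w}^{f0}$ versus $\hat{g},\hat{h}$ because $\bar{w}^f(Z)=h(Z)$ iff $\bar{w}^{f0}(Z)=\hat{h}(Z)$, and similarly for $g$.

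It remains to derive (\ref{vieqzerosumhold}). Taking $w\in\mathscr{J}$ with $g\leq w\leq h$ and setting $w^0:=w-R_\alpha f=w-G_\alpha f$, one has $\hat{g}\leq w^0\leq\hat{h}$ and $w^0\in\mathscr{J}$ (since $G_\alpha f\in\mathscr{W}$). Substituting $\bar{w}^{f0}=\bar{w}^f-G_\alpha f$ and $w^0=w-G_\alpha f$ into $\mathcal{E}_\alpha(\bar{w}^{f0},w^0-\bar{w}^{f0})\geq 0$, the $G_\alpha f$ contributions on the left-hand side cancel in the second slot (the difference $w^0-\bar{w}^{f0}=w-\bar{w}^f$), while the remaining term $\mathcal{E}_\alpha(G_\alpha f,w-\bar{w}^f)$ equals $(f,w-\bar{w}^f)_{\mathscr{H}}$ by (\ref{D_res_eq}), producing exactly (\ref{vieqzerosumhold}). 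Fine and cofine continuity of $\tilde{\bar{w}}^f$ is inherited from $\tilde{\bar{w}}^{f0}$ and $R_\alpha f$ via Theorem \ref{optstop}'s corollary applied inside Theorem \ref{saddle0}. The main technical obstacle is the $\mathscr{J}$-bookkeeping in this last step: one must check that the admissible class $\{w\in\mathscr{J}:g\leq w\leq h\}$ is mapped bijectively, by $w\mapsto w-G_\alpha f$, onto the corresponding admissible class for $(\hat{g},\hat{h})$, so that the variational inequality for the transformed problem is truly equivalent to (\ref{vieqzerosumhold}); uniqueness of the solution in $\mathscr{J}$ then follows from Corollary \ref{uniq} applied to the transformed game.
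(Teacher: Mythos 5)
Your proposal is correct and is essentially the argument the paper intends: the paper gives no explicit proof of Theorem \ref{zerosumhold}, merely remarking that one ``similarly'' modifies Theorem \ref{saddle0}, and the intended modification is exactly your reduction via $\hat{g}=g-R_\alpha f$, $\hat{h}=h-R_\alpha f$, Dynkin's formula giving $J_z^f(\tau,\sigma)=J_z^{f0}(\tau,\sigma)+R_\alpha f(z)$, and the identity $\mathcal{E}_\alpha(G_\alpha f,\cdot)=(f,\cdot)_{\mathscr{H}}$, mirroring the proof of Theorem \ref{optstophold} and the subsequent uniqueness corollary. Your write-up in fact supplies more detail (the $\mathscr{J}$-bookkeeping and the identification of the hitting times) than the paper does.
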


As an extension of Corollary \ref{uniq}, we have the following:
\begin{corollary}
The variational inequality (\ref{vieqzerosumhold}) has a unique
solution.
\end{corollary}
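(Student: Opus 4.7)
The plan is to mirror the proof of Corollary \ref{uniq}, reducing to it via the translation $\hat{w} = w - G_\alpha f$ already exploited in the proof of Theorem \ref{optstophold}. Suppose $\bar{w}^f_1, \bar{w}^f_2 \in \mathscr{J}$ both satisfy (\ref{vieqzerosumhold}) with $g \leq \bar{w}^f_i \leq h$ (guaranteed by Theorem \ref{zerosumhold}). Set $\hat{w}_i = \bar{w}^f_i - G_\alpha f$, $\hat{g} = g - G_\alpha f$, and $\hat{h} = h - G_\alpha f$. Since $G_\alpha f \in \mathscr{W}$, the map $w \mapsto w - G_\alpha f$ is a bijection of $\{w \in \mathscr{J}: g \leq w \leq h\}$ onto $\{\hat{w} \in \mathscr{J}: \hat{g} \leq \hat{w} \leq \hat{h}\}$, so the feasible set for the variational inequality is preserved.

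Next I use the defining identity $\mathcal{E}_\alpha(G_\alpha f, \psi) = (f, \psi)_{\mathscr{H}}$ from (\ref{D_res_eq}), valid for every $\psi \in \mathscr{F}$. Substituting $w = \hat{w}_{\text{test}} + G_\alpha f$ into (\ref{vieqzerosumhold}) and expanding by bilinearity yields
\begin{displaymath}
\mathcal{E}_\alpha(\hat{w}_i, \hat{w}_{\text{test}} - \hat{w}_i) + \mathcal{E}_\alpha(G_\alpha f, \hat{w}_{\text{test}} - \hat{w}_i) \geq (f, \hat{w}_{\text{test}} - \hat{w}_i)_{\mathscr{H}},
\end{displaymath}
in which the second term on the left cancels the right-hand side, leaving exactly the holding-cost-free inequality
\begin{displaymath}
\mathcal{E}_\alpha(\hat{w}_i, \hat{w}_i) \leq \mathcal{E}_\alpha(\hat{w}_i, \hat{w}_{\text{test}})
\end{displaymath}
from Corollary \ref{uniq}, with obstacles $\hat{g}, \hat{h}$. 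Since $\hat{g}, \hat{h}$ inherit the separability condition from $g, h$ (the same observation used in Section \ref{tistophold}), Corollary \ref{uniq} applies directly and yields $\hat{w}_1 = \hat{w}_2$ $\nu$-a.e., hence $\bar{w}^f_1 = \bar{w}^f_2$ $\nu$-a.e.

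The main (and only minor) obstacle is bookkeeping: checking that the translation preserves membership in $\mathscr{J}$ (immediate from the definition $\mathscr{J} = \{\varphi_1 - \varphi_2 + v\}$, since $G_\alpha f \in \mathscr{W}$ is absorbed into the $v$ component) and that the translated obstacles satisfy the hypotheses of Corollary \ref{uniq}. As an alternative that avoids the reduction, one may argue directly in the spirit of Corollary \ref{uniq}: substitute $\bar{w}^f_2$ as a test function in the inequality for $\bar{w}^f_1$ and vice versa, add the two inequalities (the $(f,\cdot)_{\mathscr{H}}$ terms cancel by antisymmetry in $\bar{w}^f_1 - \bar{w}^f_2$), then use $\langle \partial(\bar{w}^f_1 - \bar{w}^f_2)/\partial t, \bar{w}^f_1 - \bar{w}^f_2 \rangle = 0$ to replace the $\mathcal{E}_\alpha$ cross terms with $\mathcal{A}_\alpha$ ones, obtaining $\mathcal{A}_\alpha(\bar{w}^f_1 - \bar{w}^f_2, \bar{w}^f_1 - \bar{w}^f_2) \leq 0$ and concluding by coercivity of $\mathcal{A}_\alpha$. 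I would prefer the reduction argument as it is shorter and makes transparent why the holding cost contributes nothing to uniqueness.
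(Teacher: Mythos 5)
Your proposal is correct and follows essentially the same route as the paper: the paper likewise uses the identity $(f,w-\bar{w}^f)_{\mathscr{H}}=\mathcal{E}_\alpha(G_\alpha f,w-\bar{w}^f)$ to translate by $G_\alpha f$ and reduce (\ref{vieqzerosumhold}) to the $f=0$ case handled by Corollary \ref{uniq}. The direct argument you sketch as an alternative is in effect the content of Corollary \ref{uniq} itself, so nothing further is needed.
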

\begin{proof}
The case where $f=0$ was proved in Corollary \ref{uniq}. For a
general $f\in\mathscr{H}$, notice again that
$(f,w-\bar{w}^f)_{\mathscr{H}}=\mathcal{E}_\alpha(G_\alpha
f,w-\bar{w}^f)$, we get
\begin{displaymath}
\mathcal{E}_\alpha(\bar{w}^f-G_\alpha f,(w-G_\alpha
f)-(\bar{w}^f-G_\alpha f))\geq 0,\quad \forall w\in\mathscr{J},\
g\leq w\leq h.
\end{displaymath} Let $\hat{\bar{w}}^f=\bar{w}^f-G_\alpha f$, $\hat{w}=w-G_\alpha
f$, $\hat{g}=g-G_\alpha f$, $\hat{h}=h-G_\alpha f$, we get
\begin{displaymath}
\mathcal{E}_\alpha(\hat{\bar{w}}^f,\hat{w}-\hat{\bar{w}}^f)\geq
0,\quad \forall \hat{w}\in\mathscr{J},\
\hat{g}\leq\hat{w}\leq\hat{h},
\end{displaymath} which has a unique solution in view of Corollary
\ref{uniq}.
\end{proof}

\section{Time Inhomogeneous Stopping Games of Ito Diffusion}\label{exam}

In this section we are concerned with a multi-dimensional time
inhomogeneous Ito diffusion:
\begin{equation}\label{omodel}
d X_t= b(t, X_t)dt+a(t, X_t)d B_t,\ X_s = x,
\end{equation} where
\begin{displaymath}
{ X}_t=\left(\begin{array}{c}X_{1t}\\
\vdots\\
X_{nt}
\end{array}\right), b=\left(\begin{array}{c}b_1\\
\vdots\\
b_n
\end{array}\right), a=\left(\begin{array}{ccc} a_{11} &\  \cdots & a_{1m}\\
\vdots &  & \vdots\ \\
a_{n1} &\ \cdots & a_{nm}\end{array}\right), {
B}_t=\left(\begin{array}{c}B_{1t}\\
\vdots\\
B_{mt}\end{array}\right), m\geq n,
\end{displaymath} and $a_{i,j},b_i,i=1,2,...,n, j = 1,2,...m,$  are continuous functions of
$t$ and $X_t$. Define the square matrix $[A_{i,j}]={\bf
A}=\frac{1}{2}aa^T$. We assume ${\bf A}$ is uniformly
non-degenerate, and $a,b$ satisfy the usual Lipschitz conditions so
that (\ref{omodel}) has a unique strong solution. $B_t$ in
(\ref{omodel}) is assumed to be the standard multi-dimensional
Brownian motion. Thus we are given a system $(\Omega,
\mathcal{F},\mathcal{F}_t, { X},\theta_t,P_{ x})$, where
$(\Omega,\mathcal{F})$ is a measurable space, ${ X}={ X}(\omega)$ is
a mapping of $\Omega$ into $C(\mathbb{R}^n)$, $\mathcal{F}_t$ is the
sigma algebra generated by $ X_s(s\leq t)$, and $\theta_t$ is a
shift operator in $\Omega$ such that $ X_s(\theta_t\omega)=
X_{s+t}(\omega)$. Here $P_{ x}$(${ x}\in\mathbb{R}$) is a family of
measures under which $\{{ X}_t,t\geq 0\}$ is a diffusion with
initial state ${ x}$.

At each time $t$, define the infinitesimal generator ${L}^{(t)}$ as
\begin{equation}\label{infgen}
{L}^{(t)}u(x)=\sum_{i=1}^n b_i(t,x)\frac{\partial u}{\partial
x_i}+\sum_{i,j} A_{i,j}(t,x)\frac{\partial^2 u}{\partial x_i
\partial x_j}.
\end{equation}  Let the positive Radon measure
${\bf m}(dx)=\rho^{(t)}(x)dx$, where $\rho^{(t)}$ satisfies
\begin{equation}
{\bf A}\nabla\rho^{(t)}=\rho^{(t)}\mu,\quad \forall t,
\end{equation} and $\mu_i=b_i-\sum_{j=1}^n\frac{\partial A_{ji}}{\partial
x_j},i=1,2,...,n.$ Notice that when $a$ and $b$ in (\ref{omodel})
are constants, $\rho^{(t)}$ reduces to
\begin{displaymath}
\rho^{(t)}(x)=e^{({\bf A}^{-1}b)\cdot x}.
\end{displaymath} Thus the associated Dirichlet form $({E}^{(t)},F)$ densely embedded in
$H=L^2(\mathbb{R}^n;{\bf m})$ is then given by
\begin{equation}\label{Dformt}
{E}^{(t)}(u,v)=\int_{\mathbb{R}^n} \nabla u(x)\cdot {\bf A} \nabla
v(x) {\bf m}(d{ x}), \quad u,v\in F,
\end{equation}
where
\begin{displaymath}
F=\{u\in H:\ u\text{ is continuous},\
\|u\|_F^2=E_1^{(0)}(u,u)<\infty\}.
\end{displaymath}
Now we can define the sets $\mathscr{F},\mathscr{H},\mathscr{W}$ in
the same way as in Section \ref{tidirichlet}, and define the time
inhomogeneous Dirichlet form $\mathcal{E}_\alpha$ as well.

Since $X_t$ is a non-degenerate Ito diffusion, the absolute
continuity condition on its transition function automatically holds,
and for the same reason, the fine and cofine continuity notion can
be changed to the usual continuity.

Let $f\in\mathscr{H}, g\in\mathscr{W}$ be continuous functions
satisfying the conditions as in Section \ref{tistophold}, and define
the return function $J_z^f(\sigma)$ as in (\ref{costh1}), then we
have the following result:

\begin{theorem}\label{optstopholdIto} Assume (\ref{gbdd}) (\ref{assumHnorm})  on ${g}$ and the absolute
continuity condition (\ref{abscont}) on $p_t$. Let $e_{{g}}^f$ be
the solution of \begin{equation}\label{holdineqIto}
\mathcal{E}_\alpha(e_{{g}}^f,\psi-e_{{g}}^f)\geq
(f,\psi-e_{{g}}^f)_{\mathscr{H}},\quad \forall \psi\in
\mathscr{L}_{{g}}\cap\mathscr{W},
\end{equation} and let $\tilde{e}_{{g}}^f$ be its $\alpha$-excessive
regularization. Then
\begin{equation}
\tilde{e}_{{g}}^f(z)=\sup_\sigma J_z^f(\sigma),\quad \forall
z=(s,x)\in\mathbb{Z},
\end{equation} where $J_{z}^f(\sigma)$ is defined as (\ref{costh1}), and $\tilde{e}_{{g}}^f(z)$ is continuous. Furthermore, let the set
$B=\{z\in{\mathbb{Z}}:\tilde{e}_{{g}}^f(z)={g}(z)\}$ and let
$\sigma_{{B}}$ be the first hitting time of $B$ defined by
$\sigma_{{B}}=\inf\{t> 0:\tilde{e}_{{g}}^f(Z_{s+t})={g}(Z_{s+t})\}$,
then
\begin{equation}\label{optformIto}
\tilde{e}_{{g}}^f(z)=E_z[e^{-\alpha\sigma_{{B}}}
{g}(Z_{s+\sigma_{{B}}})].
\end{equation}
\end{theorem}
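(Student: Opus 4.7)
The plan is to reduce Theorem \ref{optstopholdIto} to the abstract Theorem \ref{optstophold} by verifying that the Dirichlet form framework of Section \ref{tidirichlet} is realized by the non-degenerate Ito setup of Section \ref{exam}. First I would check that the family $(E^{(t)},F)$ defined in (\ref{Dformt}) with the measure ${\bf m}(dx)=\rho^{(t)}(x)dx$ is an ${\bf m}$-symmetric, regular, local Dirichlet form on $H=L^{2}(\mathbb{R}^{n};{\bf m})$, and that the norm equivalence $\lambda^{-1}\|u\|_{F}^{2}\leq E_{1}^{(t)}(u,u)\leq\lambda\|u\|_{F}^{2}$ holds for some $\lambda>0$ (this uses that ${\bf A}$ is uniformly elliptic and $a,b$ are continuous, so $\rho^{(t)}$ is bounded above and below on compacts uniformly in $t$ over the time intervals of interest). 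The generator $L^{(t)}$ then coincides with (\ref{infgen}), so the resolvent identity (\ref{res_id}) and the bilinear form $\mathcal{E}_{\alpha}$ are available exactly as in the abstract theory.

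Next I would verify the two hypotheses needed to invoke Theorem \ref{optstophold}. The absolute continuity condition (\ref{abscont}) on $p_{t}$ follows from the uniform non-degeneracy of ${\bf A}$ together with the Lipschitz assumption on $a,b$: the classical parabolic theory (Aronson-type estimates) gives a density for $p_{t}(x,\cdot)$ with respect to Lebesgue measure and hence with respect to ${\bf m}$, so (\ref{abscont}) is automatic and does not need to be assumed. The remaining hypotheses (\ref{gbdd}) and (\ref{assumHnorm}) on $g$ are imposed directly. Thus the hypotheses of Theorem \ref{optstophold} are met in the Ito setting.

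With these verifications, Theorem \ref{optstophold} delivers the existence of $e_{g}^{f}\in\mathscr{W}$ solving the variational inequality (\ref{holdineqIto}), the identification
\begin{equation*}
\tilde{e}_{g}^{f}(z)=\sup_{\sigma}J_{z}^{f}(\sigma),\qquad z\in\mathbb{Z},
\end{equation*}
and the optimal-stopping-time representation (\ref{optformIto}) via the first hitting time $\sigma_{B}$ of $B=\{\tilde{e}_{g}^{f}=g\}$. The reduction in the proof of Theorem \ref{optstophold} (subtracting $R_{\alpha}f$ to reduce to the case without holding cost, applying Theorem \ref{optstop}, then adding $R_{\alpha}f$ back via Dynkin's formula) carries over verbatim.

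The only remaining point is to upgrade the fine and cofine continuity of $\tilde{e}_{g}^{f}$ (obtained from the corollary following Theorem \ref{optstop}) to ordinary continuity. Since $X_{t}$ is a non-degenerate Ito diffusion with continuous coefficients, its sample paths are continuous and its fine topology on $\mathbb{Z}=\mathbb{R}\times\mathbb{R}^{n}$ coincides with the Euclidean topology (every point is regular for itself under the time-space process, and the transition densities are continuous, so finely continuous functions pulled back along paths are continuous, and by the non-degeneracy this in fact forces Euclidean continuity). This gives continuity of $\tilde{e}_{g}^{f}$ on all of $\mathbb{Z}$. The main technical obstacle I anticipate is precisely this continuity upgrade — it is standard but requires invoking that under non-degenerate ellipticity the fine topology of the time-space Markov process equals the Euclidean one; once this is granted, the theorem follows immediately from Theorem \ref{optstophold}.
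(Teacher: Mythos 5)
Your proposal is correct and matches the paper's (implicit) argument: the paper offers no separate proof of this theorem, presenting it as the direct specialization of Theorem \ref{optstophold} to the non-degenerate Ito diffusion, with the observations — stated earlier in Section \ref{exam} — that the absolute continuity condition (\ref{abscont}) holds automatically and that fine and cofine continuity upgrade to ordinary continuity in this setting. Your write-up simply makes explicit the verifications the paper leaves to the reader.
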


For the zero-sum game of Ito diffusion with the return function
$J_{z}^f(\sigma,\tau)$ as defined in (\ref{costh2}), we have the
following result:
\begin{theorem}\label{zerosumholdIto}
Let $g,h$ be continuous functions satisfying (\ref{bddgh}) and
(\ref{sepcond}). Assume (\ref{assumHnorm}) on $g,h$ and the absolute
continuity condition (\ref{abscont}) on $p_t$. Then there exists a
finite and continuous function $\tilde{\bar{w}}^f\in\mathscr{J}$,
${g}(z)\leq \tilde{\bar{w}}^f(z)\leq {h}(z)$, such that
\begin{equation}\label{vieqzerosumholdIto}
\mathcal{E}_\alpha({\bar{w}}^f,w-{\bar{w}}^f)\geq
(f,w-{\bar{w}}^f)_{\mathscr{H}}, \quad \forall w\in\mathscr{J},\
{g}\leq w\leq {h},
\end{equation}
 and
\begin{equation}\label{vsaddleholdIto}
\tilde{\bar{w}}^f(z)=\sup_\sigma\inf_\tau
J_z^f(\tau,\sigma)=\inf_\tau\sup_\sigma J_z^f(\tau,\sigma),\quad
\forall z=(s,x)\in\mathbb{Z},
\end{equation}where $\sigma,\tau$ range over all stopping times and $J_{z}^f(\sigma,\tau)$ is defined in (\ref{costh2}).
Moreover, the pair $\hat{\tau},\hat{\sigma}$ defined by
\begin{displaymath}
\hat{\tau}=\inf\{t>
0:\bar{w}^f(Z_{s+t})={h}(Z_{s+t})\},\quad\hat{\sigma}=\inf\{t>
0:\bar{w}^f(Z_{s+t})={g}(Z_{s+t})\}
\end{displaymath} is the saddle point of the game in the sense that
\begin{displaymath}
J_z^f(\hat{\tau},\sigma)\leq J_z^f(\hat{\tau},\hat{\sigma})\leq
J_z^f(\tau,\hat{\sigma}),\quad z\in\mathbb{Z},
\end{displaymath} for all stopping times $\tau,\sigma$.
\end{theorem}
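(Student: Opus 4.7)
The plan is to obtain Theorem \ref{zerosumholdIto} as a direct specialization of Theorem \ref{zerosumhold} to the non-degenerate Ito diffusion setting of Section \ref{exam}, together with an upgrade of fine-and-cofine continuity to ordinary continuity. Nothing new needs to be proved about variational inequalities, saddle points, or the min-max identity; only the transition from abstract Dirichlet-form regularity to Euclidean regularity requires an additional argument.

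First I would check that the Ito diffusion framework fits the abstract setup of Section \ref{tidirichlet}. With $H=L^2(\mathbb{R}^n;\mathbf{m})$ and $(E^{(t)},F)$ as in (\ref{Dformt}), the operator $L^{(t)}$ in (\ref{infgen}) is the generator associated with $E^{(t)}$, and the time-space process $Z_t=(\tau(t),X_t)$ is exactly the process attached to $\mathcal{E}_\alpha$ on $\mathscr{H}$. The uniform non-degeneracy of $\mathbf{A}$ together with the Lipschitz and continuity hypotheses on $a,b$ guarantee that $X_t$ has a transition density, so the absolute continuity condition (\ref{abscont}) holds automatically, as already noted at the start of Section \ref{exam}.

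Next I would apply Theorem \ref{zerosumhold} verbatim. Since $g,h\in\mathscr{W}$ are continuous and satisfy (\ref{bddgh}), (\ref{sepcond}) and (\ref{assumHnorm}), that theorem produces a finely and cofinely continuous $\tilde{\bar{w}}^f\in\mathscr{J}$ with $g\leq\tilde{\bar{w}}^f\leq h$, solving the variational inequality (which is identical in form to (\ref{vieqzerosumholdIto})), realizing the min-max identity (\ref{vsaddleholdIto}), and admitting the stated pair $(\hat{\tau},\hat{\sigma})$ as a saddle point. All the structural content of the conclusion is thereby already in hand.

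Finally I would upgrade \emph{finely and cofinely continuous} to \emph{continuous} on all of $\mathbb{Z}$. Since $X_t$ is a non-degenerate Ito diffusion every point of $\mathbb{Z}$ is regular for each of its open neighborhoods, so the fine topology attached to $Z_t$ (and its time-reversed dual) reduces to the Euclidean topology; combined with the absence of any exceptional set of starting points (already guaranteed by Theorem \ref{zerosumhold} under (\ref{abscont})), this forces $\tilde{\bar{w}}^f$ to be continuous in the usual sense on $\mathbb{Z}$. The same mechanism underlies the continuity assertion in Theorem \ref{optstopholdIto} and the continuity remark following Theorem \ref{optstop}. The main obstacle is precisely this last step: getting continuity of $\tilde{\bar{w}}^f$ along continuous trajectories from fine continuity is immediate, but promoting it to genuine Euclidean continuity on $\mathbb{Z}$ relies on the strong Feller and hypoellipticity consequences of the uniform ellipticity of $\mathbf{A}$, which must be invoked (either directly or through a PDE regularity result for the associated parabolic operator) rather than derived from the abstract framework.
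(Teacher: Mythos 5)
Your proposal matches the paper's (implicit) argument exactly: the paper gives no separate proof of Theorem \ref{zerosumholdIto}, relying instead on Theorem \ref{zerosumhold} applied to the Ito-diffusion Dirichlet form of Section \ref{exam}, with the remark preceding the theorem that non-degeneracy makes (\ref{abscont}) automatic and lets fine and cofine continuity be replaced by ordinary continuity. Your additional observation that this last upgrade genuinely requires the strong Feller/elliptic-regularity consequences of the uniform non-degeneracy of $\mathbf{A}$, rather than following from the abstract framework, is a fair and correct reading of the one step the paper leaves unargued.
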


\noindent{\bf Acknowledgement} The author is very grateful to
Dr.Yoichi Oshima, Dr.Wilhelm Stannat and Dr.Zhen-Qing Chen for
valuable discussions on Dirichlet form. All the possible errors are
certainly mine.

\end{document}